\documentclass[a4paper]{article}
\usepackage[normalem]{ulem}
\usepackage{graphicx} % Required for inserting images
\usepackage{amsmath,amssymb,amsthm}
\usepackage{enumitem}
\usepackage[margin=1.0in]{geometry}
\usepackage{cancel}
\usepackage{comment}

\newcommand{\sumjni}{\sum_{\substack{j=1\\j \neq i}}^N}

\allowdisplaybreaks

\newcommand{\R}{\mathbb{R}}

\usepackage{xcolor}

\newtheorem{theorem}{Theorem}[section]
\newtheorem{lemma}[theorem]{Lemma}
\newtheorem{corollary}[theorem]{Corollary}
\newtheorem{proposition}[theorem]{Proposition}

\usepackage[backend=bibtex,maxbibnames=99]{biblatex}
\usepackage{cleveref}

\newcommand{\inner}[2]{\langle #1,#2\rangle}

\newcommand{\Rl}{\mathcal I_R}

\numberwithin{equation}{section}

\title{Symmetric Polynomials Method for the Multiple Loewner Chains}
\author{Sergio Andraus, Nicole Hufnagel, Jacek Małecki, Vlad Margarint}
\date{\today}

\begin{document}

\maketitle
\begin{abstract}
We introduce the technique of Symmetric Polynomials in the study of Loewner chains driven by a family of stochastic particle systems including but not limited to Dyson Brownian motion. As a first application of this technique we obtain the phase transitions at the critical parameter $\kappa=4$ in terms of the behavior of a class of symmetric polynomials evolving in time. We obtain new proofs of the phase transitions, we also obtain new closed-form expressions relevant for the models. In addition, we apply this method for some initial deterministic multiple Loewner evolutions. Finally, as this is, to our knowledge, the first paper analyzing this method in the context of multiple Loewner chains driven by root-system based multivariate processes, we also give an overview of potential future applications of this technique in the study of these models. 

\end{abstract}

\section{Introduction}

The method of Symmetric polynomials is a powerful technique which allows us to study the union of hulls generated by the Loewner equation as a `unique notion' of boundary. Starting from a general Loewner theory/Complex Dynamics angle, we first consider some general models in which we study classes interacting particle systems as drivers of the Multiple Loewner Equations. The introduction of symmetric polynomials that relate the Loewner dynamics and the drivers allows us via `symmetrization' of these elements to study new dynamics which allow us to detect phase transitions in the parameters. In addition, we study related objects in the case of Multiple Loewner Equation driven by Dyson Brownian Motion, which has connections with Mathematical Physics and Conformal Field Theory, see \cite{Cardy}. In that case, in addition to new proofs of the phase transitions, we also obtain new closed-form expressions relevant for the models. Moreover, we perform some initial study on some deterministic multiple Loewner dynamics models. All of the aforementioned developments can be further generalized and investigated.

Schramm-Loewner evolution (SLE), introduced by Schramm \cite{Schramm2000}, describes conformally invariant random growth and arises in scaling limits of planar statistical mechanics models; see, for example, \cite{k2,k3,k6}. In the chordal setting, the Loewner equation in the upper half-plane is
    \begin{equation*}
        \partial_t g_t(z) = \frac{2}{g_t(z)-W(t)}, \qquad g_0(z)=z.
    \end{equation*}
For a continuous real driving function $W$, its solutions define conformal maps $g_t:\mathbb H\setminus K_t\to\mathbb H$ associated with growing hulls $K_t$. The parametrization is fixed by
    \begin{equation*}
        g_t(z)=z+\frac{2t}{z}+O(z^{-2}) \qquad\text{as }z\to\infty.
    \end{equation*}
Chordal $\mathrm{SLE}_\kappa$ is obtained by taking $W(t)=\sqrt{\kappa}B(t)$, where $B$ is a standard one-dimensional Brownian motion and $\kappa\geq0$; see \cite{BerestyckiNorris}.

\medskip

One generalization of the SLE model that has received attention in recent years is a multi-driver version of the Loewner differential equation. One such case is the multiple SLE model, where the single Brownian driving function is replaced by a system of particles following Dyson Brownian motion (DBM). These particles evolve under independent Brownian noise and interact through inverse-distance repulsion. For works in this direction, see \cite{Toma,KaVlad,JiamVlad,HealeyLawler,abuzaid2024large}. Other formulations of multiple SLE are studied in \cite{Peltola,Dub,Zhan}. These models also have connections with conformal field theory; see, for example, \cite{Cardy}. We use the normalization of Dyson Brownian motion corresponding to $\beta=8/\kappa$, $\kappa>0$, in which the particles satisfy \cite{Hotta_2018}
\begin{equation}
\label{eq:sde:dyson}
    d \lambda_i (t) = \sqrt{\kappa}\,dB_i(t) +  \sumjni \frac{4\,}{\lambda_i(t)-\lambda_j(t)}dt, \quad  i = 1, \dots, N.
\end{equation}
For simplicity, we denote the driver configuration by $\lambda(t):=(\lambda_1(t),\ldots,\lambda_N(t))$.

The corresponding multiple Loewner equation, with weight $2$ assigned to each driver, is
\begin{equation}
\label{eq:multipleSLE}
    \partial_{t} g_{t}(z) = \sum_{i=1}^{N}\frac{2}{g_{t}(z)-\lambda_i(t)}, \quad g_0(z) = z.
\end{equation}

There are instances where the rhs of this equation has a denominator of $N$; we consider the version of the multiple Loewner equation given here to keep the format consistent with the one in \cite{Katori_2021}. We study the evolution of real initial points under these maps. For an initial point $x$ distinct from the drivers, the solution $g_t(x)$ remains real up to its first encounter with a driver or the first collision among the driving particles. Our questions concern whether a driver is reached, which driver is reached, and how this hitting time compares with particle collisions.

\medskip

In the one-driver case, the gap $X_t=g_t(x)-W(t)$ satisfies
    \begin{equation*}
        dX_t=\frac{2}{X_t}\,dt-\sqrt{\kappa}\,dB_t.
    \end{equation*}
For $\kappa>0$, the process $|X_t|/\sqrt{\kappa}$ is a Bessel process of dimension $1+4/\kappa$ up to its first zero. Its boundary classification gives a transition at $\kappa=4$, see \cite{BerestyckiNorris}. This suggests seeking scalar observables that play a similar role for several interacting drivers. Symmetric polynomials provide such observables. The sum in \eqref{eq:multipleSLE} is unchanged by relabelling the drivers, and symmetric functions of the gaps $g_t(x)-\lambda_i(t)$ have the same property. We consider elementary symmetric polynomials in the squared gaps. These are nonnegative, and their stochastic dynamics exhibit cancellations of the singular particle-interaction terms. Related polynomial methods have been used in the analysis of singular interacting particle systems. In \cite{GraczykMalecki}, they yield strong existence and pathwise uniqueness under suitable assumptions. In \cite{AHM}, symmetric polynomials in squared root projections are used to study collision types and the Hausdorff dimension of collision times. Here, we apply these methods to the interaction between a Loewner image and its driving particles.

\medskip

We study the applications of this method in various contexts. One of them is linked with deterministic Loewner theory. There is a significant interest of the community in the deterministic direction, see, for example, \cite{LindRohde,LindRohde2,JinwooYilin}, in the one-curve case (see also \cite{stochasticversionlindrohdepaper} for a probabilistic study in the context of SLE).

Small-noise limits as $\kappa\to0$ have also been studied; see, for example, \cite{abuzaid2024large,PeltolaWangdeterministic}  for a non-exhaustive list of references.  In addition, there is an extended interest in the study of the multiple SLE with Dyson Brownian Motion drivers due to its connections with Physics, cf. \cite{Hotta_2018,hotta2021limits}. See also the non-exhaustive list of references on Multiple SLE analysis \cite{Peltola,berestycki2025uniform,Karrila2020,kim2026numerical,LennelsViklund,peltola2019global,Zhan}.

One quantity that appears in our analysis is the hitting time of the driver under the Loewner dynamics started from some initial position. This quantity is linked in the one-curve case with the conformal welding, which is a very important object of study (see \cite{MargarintTim} for a deterministic case study, see \cite{ang2025liouville,ang2023cutting,ang2026conformal,astala2011random,Vladweld,duplantier2021liouville,Kavvadias2026,McEnteggart2021,sheffield2016conformal,sun2024sle}, for related studies in the random case).

\medskip

We extend the multiple SLE driven by Dyson Brownian motion model to multiple Loewner chains in two directions. The first direction concerns the driving particle system. We allow the diffusion coefficients, regular drift, and singular repulsion coefficients to depend on time and on the entire particle configuration. A general framework for such systems was introduced by Małecki \cite{Malecki2026}. It encompasses Dyson-type systems, radial Dunkl processes, squared-Bessel and Wishart particle systems, and various non-colliding diffusion models. The repulsion coefficients may also vanish on the corresponding chamber walls. It is known that these interacting particle systems have unique strong solutions. We use these solutions as drivers of the multiple Loewner chains. We consider the dynamics on the real line of these models up to the first singularity that occurs.

\medskip

The second direction concerns the Loewner evolution itself. We associate the driving particle system with a finite family of interface directions invariant under its reflection group. This construction gives a root-adapted Loewner equation and a corresponding family of symmetric polynomial observables. We derive their stochastic dynamics for a general finite reduced root system and establish boundary-hitting results for the classical systems of types $A_{N-1}$, $B_N$, and $D_N$. For the sake of simplicity, we omit the range of these root systems when they are in a sub/superscript.

\medskip

The symmetric polynomials method is one way to extend the Bessel analysis on the real line of the one-curve SLE case. In the one-curve SLE case, one can relate the behavior of the Bessel process on the real line with the SLE traces to obtain phase transitions \cite{BerestyckiNorris}. The new method of symmetric polynomials aims to provide an extension of this analysis to multiple Loewner chains. In the general multiple Loewner chain models we consider, the geometry of the hulls is less understood, but we envision that the phase transitions and other analyses that we perform in this work will provide insights on these in the future.

\medskip

Let $e_1,\ldots,e_N$ be the standard basis in $\R^N$, and let $R\subset\R^N$ be a finite reduced root system. We fix a choice of positive roots $R_+$ and denote by $W_R$ the corresponding reflection group. 
The associated open Weyl chamber is
\begin{equation*}
    C_R := \left\{ x\in\R^N: \langle\alpha,x\rangle>0 \text{ for every }\alpha\in R_+ \right\}.
\end{equation*}
Its closure and boundary are denoted by $\overline{C_R}$ and $\partial C_R$, respectively. We consider the generalized Bessel-Dunkl particle system
\begin{equation}
    \label{eq:general-root-particle-system}
    d\lambda_i(t) = \sigma_i\bigl(t,\lambda(t)\bigr)\,dB_i(t) + b_i\bigl(t,\lambda(t)\bigr)\,dt  + \sum_{\alpha\in R_+} k_\alpha\bigl(t,\lambda(t)\bigr)\frac{\alpha_i}{\langle\alpha,\lambda(t)\rangle}\,dt, \qquad i=1,\ldots,N.
\end{equation}
Here, $B(t) = \bigl(B_1(t),\ldots,B_N(t) \bigr)$ is an $N$-dimensional Brownian motion with independent components. The functions $\sigma_i,b_i: [0,\infty)\times\overline{C_R} \longrightarrow\R$ are the diffusion and regular drift coefficients, while $k_\alpha: [0,\infty)\times\overline{C_R} \longrightarrow[0,\infty)$, $\alpha\in R_+$, determine the strength of the repulsion from the root hyperplanes. Thus, the repulsive force is no longer determined by constant root multiplicities. Its strength may vary with time and with the particle configuration, although its direction remains normal to the corresponding root hyperplane. The classical radial Dunkl process is recovered by taking $\sigma_i\equiv1$ and $b_i\equiv0$ for every $i$, and $k_\alpha(t,x)\equiv k(\alpha)$, where $k$ is a $W_R$-invariant multiplicity function.

For the SDE calculations below, the system is initially considered in the interior of the Weyl chamber, up to the
 stopping time
\begin{equation}
    \label{eq:general-chamber-boundary-time}
    \tau_{\rm coll} := \inf\left\{t>0: \langle\alpha,\lambda(t)\rangle=0\text{ for some }\alpha\in R_+\right\},
\end{equation}
which we will refer to as \emph{collision time} and involves only the drivers.
All calculations are performed before $\tau_{\rm coll}$. We assume that the particle equation admits a continuous non-explosive semimartingale solution on this interval and that its coefficients satisfy the local integrability conditions required for the stochastic integrals below.

Let $e_0$ be a unit vector orthogonal to $\R^N$, and identify $\R^{N+1}=\R e_0\oplus\R^N$. We regard every vector $v\in\R^N$ as the vector $(0,v)\in\R^{N+1}$. Throughout, $\langle\cdot,\cdot\rangle$ denotes the Euclidean inner product in the ambient space. Thus, for $u,v\in\R^{N+1}$,
    \begin{equation*}
        \langle u,v\rangle=\sum_{i=0}^N u_i v_i.
    \end{equation*}
Its restriction to the embedded copy of $\R^N$ is the usual Euclidean inner product on $\R^N$. Extend the action of $W_R$ to $\R^{N+1}$ by fixing $e_0$. Given a finite $W_R$-invariant set $\mathcal V_R\subset\R^N$, define
    \begin{equation}
        \label{eq:general-interface-set}
        \Rl:=\{e_0-v:v\in\mathcal V_R\}.
    \end{equation}

For $z\in\mathbb H$, and for real initial points at which the Loewner equation below is nonsingular, put
    \begin{equation}
        \label{eq:extended-loewner-state}
        \eta(t,z):=g_t(z)e_0+\sum_{i=1}^N\lambda_i(t)e_i,
    \end{equation}
and define the interface coordinates by
    \begin{equation}
        \label{eq:interface-coordinate}
        X_\xi(t,z):=\langle\xi,\eta(t,z)\rangle, \qquad \xi\in\Rl.
        % X_\xi(t,z):=\langle\xi,\eta(t,z)\rangle_{0}, \qquad \xi\in\Rl.
    \end{equation}
The real linear forms are extended complex linearly. If $\xi=e_0-v$, then
    \begin{equation*}
        X_\xi(t,z)=g_t(z)-\langle v,\lambda(t)\rangle.
    \end{equation*}
Let $a_\xi(t)$, $\xi\in\Rl$, be nonnegative, continuous, adapted, locally bounded processes. For each $\xi$, assume either $a_\xi\equiv0$ or $a_\xi(t)>0$ throughout the interval under consideration. The associated Loewner equation is
    \begin{equation}
        \label{eq:general-root-adapted-loewner}
        \partial_tg_t(z) =\sum_{\xi\in\Rl}\frac{a_\xi(t)}{X_\xi(t,z)}, \qquad g_0(z)=z.
    \end{equation}
A summand with identically zero coefficient is understood to be zero, including at a zero of its denominator.

For an initial point distinct from the poles at time zero, let
    \begin{equation*}
        %\label{eq:general-interface-hitting-time}
        \tau_z :=\inf\left\{t>0:\liminf_{s\uparrow t}|X_\xi(s,z)|=0\text{ for some }\xi\in\Rl\text{ with }a_\xi(t)>0\right\},
    \end{equation*}
where $\inf\varnothing=\infty$. The Loewner evolution is considered for $t<\tau_z\wedge\tau_{\rm coll}$. For the classical root systems, we take 

\begin{equation}
\label{eq:interface:directions}
\begin{aligned}
    \mathcal V_A
        &=\{e_i:1\leq i\leq N\},
    &\mathcal I_A
        &=\{e_0-e_i:1\leq i\leq N\},\\
    \mathcal V_B
        &=\{0\}\cup\{\pm e_i:1\leq i\leq N\},
    &\mathcal I_B
        &=\{e_0\}\cup\{e_0-e_i,e_0+e_i:1\leq i\leq N\},\\
    \mathcal V_D
        &=\{\pm e_i:1\leq i\leq N\},
    &\mathcal I_D
        &=\{e_0-e_i,e_0+e_i:1\leq i\leq N\}.
\end{aligned}
\end{equation}
These interface vectors are precisely the roots with $e_0$ coordinate equal to $1$ in the corresponding extensions $A_N$, $B_{N+1}$, and $D_{N+1}$. The interface coordinates are $g_t(z)-\lambda_i(t)$ in type $A_{N-1}$, $g_t(z)$ and $g_t(z)\pm\lambda_i(t)$ in type $B_N$, and $g_t(z)\pm\lambda_i(t)$ in type $D_N$. The coefficients of the classical models are specified in Section~\ref{sec:classical-phase-transitions}.

\medskip

For real initial points, the coordinates $X_\xi(t,x)$ are signed gaps between the Loewner image and the interface points. The action of $W_R$ permutes these coordinates, so symmetric polynomials in their squares are invariant functions of the particle configuration. This algebraic symmetry underlies the cancellation of the particle-root denominators in their stochastic dynamics.

\medskip

In Section~\ref{sec:SymmPolyn}, we derive these dynamics for the general particle system and interface family. In Section~\ref{sec:classical-phase-transitions}, we specialize to the classical models. When the highest polynomial has a strictly positive initial value, a multiplicative normalization and a time change yield a squared Bessel process of dimension $1+4/\kappa$.

\medskip

For the classical coefficients specified there, the particle chamber boundary is inaccessible when $0<\kappa\leq8$ and is reached almost surely when $\kappa>8$. For $0<\kappa\leq8$, each fixed admissible real initial point almost surely never meets a driver when $\kappa\leq4$ and hits a driver in finite time almost surely when $\kappa>4$. This conclusion holds in types $A_{N-1}$ and $B_N$ and in the noncentral components of type $D_N$. In the central component of type $D_N$, the first hitting time of a driver is almost surely finite throughout $0<\kappa\leq8$.

\medskip

Section~\ref{sec:collisions} examines two type-$A_1$ drivers and an initial point between them. A time-changed gap-ratio diffusion gives the probabilities of hitting either driver and determines the ordering of hitting and particle collision. We subsequently consider deterministic examples and discuss further questions in Section~\ref{sec:outlook}.

\section{Root systems and notation}
\label{sec:root-systems}

For a nonzero vector $\beta\in\R^N$, let
    \begin{equation*}
        s_\beta(y) :=y-\frac{2\langle y,\beta\rangle}{|\beta|^2}\beta, \qquad y\in\R^N,
    \end{equation*}
be the reflection in the hyperplane orthogonal to $\beta$. A finite reduced root system is a finite set
$R\subset\R^N\setminus\{0\}$ such that
    \begin{equation*}
        s_\beta(R)=R, \qquad R\cap\R\beta=\{\beta,-\beta\}, \qquad \beta\in R.
    \end{equation*}
The roots need not span the whole ambient space. The group generated by the reflections $s_\beta$, $\beta\in R$, is denoted by $W_R$.

Choose $h\in\R^N$ such that
$\langle\beta,h\rangle\neq0$ for every $\beta\in R$.
The corresponding positive roots are
    \begin{equation*}
        R_+:=\{\beta\in R:\langle\beta,h\rangle>0\},
    \end{equation*}
so that $R=R_+\cup(-R_+)$.

For the classical systems considered below, we use
    \begin{align*}
        (A_{N-1})_+ &=\{e_j-e_i:1\leq i<j\leq N\},\\
        (D_N)_+     &=\{e_j-e_i,e_j+e_i:1\leq i<j\leq N\},\\
        (B_N)_+     &=(D_N)_+\cup\{e_i:1\leq i\leq N\}.
    \end{align*}
Thus $|e_j-e_i|^2=|e_j+e_i|^2=2$ and $|e_i|^2=1$. The group $W_A$ acts by coordinate permutations, $W_B$ by signed coordinate permutations, and $W_D$ by signed coordinate permutations with an even number of sign changes. We use the embedding $\R^N\subset\R^{N+1}$ introduced above. Particle roots have zero $e_0$ coordinate, whereas every $\alpha\in\mathcal I_R$ has $\alpha_0=1$. The action of $W_R$ on $\R^{N+1}$ fixes $e_0$.

\section{Symmetric polynomials with the generalized Loewner dynamics}
\label{sec:SymmPolyn}

We consider symmetric polynomials associated with the particle system \eqref{eq:general-root-particle-system} and the Loewner evolution \eqref{eq:general-root-adapted-loewner}. Throughout this section, $x\in\R$ and $t<\tau_x\wedge\tau_{\rm coll}$. Whenever we mention a hitting time, it will refer to $\tau_x$, that is, $g_t(x)$ hitting one of the drivers. The interface coordinates $X_\xi(t,x)$ are real, and the zero-coefficient convention from the introduction applies to the Loewner equation. Put $M:=|\Rl|$. For $I\subseteq\Rl$ and an integer $n$, define
    \begin{equation}
        \label{eq:deleted-symmetric-polynomial}
        E_n^I(t,x) := \sum_{\substack{J\subseteq\Rl\setminus I\\|J|=n}} \prod_{\xi\in J}X_\xi(t,x)^2.
    \end{equation}
We use the conventions
    \begin{equation*}
         E_0^I=1, \qquad E_n^I=0 \quad\text{if }n<0 \text{ or }n>|\Rl\setminus I|.
    \end{equation*}
Write $E_n:=E_n^\varnothing$. In particular,
    \begin{equation*}
        %\label{eq:highest-full-interface-polynomial}
        E_M(t,x)=\prod_{\xi\in\Rl}X_\xi(t,x)^2.
    \end{equation*}
These polynomials are nonnegative. The polynomial $E_M$ vanishes exactly when at least one interface coordinate vanishes; hitting corresponds to vanishing of a coordinate with positive Loewner weight. Since $\mathcal V_R$ is $W_R$-invariant, the action of $W_R$ on $\R^{N+1}$ fixing $e_0$ permutes $\Rl$. For fixed $g$, the transformation $\lambda\mapsto w\lambda$ therefore permutes the squared interface coordinates. Consequently, each $E_n$, regarded as a polynomial in $(g,\lambda)$, is invariant under this action. The generating polynomial is
    \begin{equation}
        \label{eq:squared-gap-generating-polynomial}
        \mathcal P_t(u,x) := \prod_{\xi\in\Rl} \bigl(u+X_\xi(t,x)^2\bigr) = \sum_{n=0}^M E_n(t,x)u^{M-n}.
    \end{equation}
    The arguments $(t,x)$ are suppressed whenever no ambiguity arises.

\medskip

We shall use the following elementary identities. For $\xi,\zeta\in\Rl$ with $\xi\neq\zeta$ and $n=0,\ldots,M$,
    \begin{enumerate}[label=(S\arabic*),leftmargin=2cm]
        \item 
        \label{prop:symmpoly:SumejX}
            \begin{equation*}
                \sum_{\xi\in\Rl}E_n^{\{\xi\}} = (M-n)E_n.
            \end{equation*}
        \item \label{prop:eN}
            \begin{equation*}
                 E_M = X_\xi^2E_{M-1}^{\{\xi\}} = X_\xi^2X_\zeta^2 E_{M-2}^{\{\xi,\zeta\}}.
            \end{equation*}
        \item \label{prop:symmpoly:eXn}
            \begin{equation*}
                E_n = E_n^{\{\xi\}} + X_\xi^2E_{n-1}^{\{\xi\}}.
            \end{equation*}
    \item \label{prop:symmpoly:ejX=ejkX}
        \begin{equation*}
            E_n^{\{\xi\}} = E_n^{\{\xi,\zeta\}} + X_\zeta^2E_{n-1}^{\{\xi,\zeta\}}.
        \end{equation*}    
    \end{enumerate}
Identity~\ref{prop:symmpoly:SumejX} follows because each monomial in $E_n$ occurs once for every omitted index outside its $n$ factors, hence exactly $M-n$ times. The remaining identities follow by separating the monomials according to whether they contain the indicated variables. In particular,
    \begin{equation*}
        E_{M-1}^{\{\xi\}} = X_\zeta^2E_{M-2}^{\{\xi,\zeta\}}, \qquad \xi\neq\zeta.
    \end{equation*}

\medskip

We identify each particle root $\beta\in R$ with its embedding in $\R^{N+1}$ having zero $e_0$ coordinate. Every interface vector $\alpha\in\Rl$ has $e_0$ coordinate equal to $1$. Consequently,
    \begin{equation*}
        \langle\alpha,\beta\rangle = \sum_{i=1}^N\alpha_i\beta_i, \qquad \alpha\in\Rl,\quad \beta\in R.
    \end{equation*}
    
The reflection associated with $\beta$ is
    \begin{equation*}
        s_\beta(y) = y-\frac{2\langle y,\beta\rangle}{|\beta|^2}\beta, \qquad y\in\R^{N+1}.
    \end{equation*}
It fixes $e_0$ and permutes $\Rl$. Moreover,
    \begin{equation}
        \label{eq:reflect:inner}
        \langle s_\beta(\alpha),\beta\rangle = -\langle\alpha,\beta\rangle,
    \end{equation} 
and
    \begin{align}
        X_\alpha-X_{s_\beta(\alpha)} &\stackrel{\eqref{eq:interface-coordinate}}{=}\langle\alpha-s_\beta(\alpha),\eta\rangle \stackrel{\eqref{eq:extended-loewner-state}}{=}\frac{2\langle\alpha,\beta\rangle}{|\beta|^2}\langle\beta,\lambda\rangle.
        \label{eq:reflect:difference}
    \end{align}
For $\alpha\in\Rl$, define
    \begin{equation}
        \label{def:kappa:b:alpha}
        \kappa_\alpha(t,\lambda) :=\sum_{i=1}^N \alpha_i^2\sigma_i(t,\lambda)^2, \qquad b_\alpha(t,\lambda) :=\sum_{i=1}^N \alpha_i b_i(t,\lambda).
    \end{equation}
In the stochastic calculations below, the particle coefficients are evaluated at $(t,\lambda(t))$, and their arguments are suppressed. We also write $a_\alpha$ for $a_\alpha(t)$ and $X_\alpha$ for $X_\alpha(t,x)$.

\begin{proposition}[Dynamics of the symmetric polynomials]
\label{prop:En:SDE}
For every $n=1,\ldots,M$ and $t<\tau_x\wedge\tau_{\rm coll}$,
    \begin{align}
        dE_n ={}& 2\sum_{i=1}^N\sigma_i \left(\sum_{\alpha\in\Rl}\alpha_iX_\alpha E_{n-1}^{\{\alpha\}}\right)dB_i +2\sum_{\alpha\in\Rl} b_\alpha X_\alpha E_{n-1}^{\{\alpha\}}\,dt + \sum_{\alpha\in\Rl} \kappa_\alpha E_{n-1}^{\{\alpha\}}\,dt\notag\\
            & + 2\sum_{\substack{\alpha,\gamma\in\Rl\\\alpha\neq\gamma}} X_\alpha X_\gamma E_{n-2}^{\{\alpha,\gamma\}}\left(\sum_{i=1}^N\alpha_i\gamma_i\sigma_i^2\right)dt \notag\\
            & + 2\sum_{\beta\in R_+}\frac{k_\beta}{|\beta|^2}\sum_{\substack{\alpha\in\Rl\\s_\beta(\alpha)\neq\alpha}} \langle\alpha,\beta\rangle^2 \left(E_{n-1}^{\{\alpha,s_\beta(\alpha)\}}-X_\alpha X_{s_\beta(\alpha)}E_{n-2}^{\{\alpha,s_\beta(\alpha)\}}\right)dt \notag\\
            & + 2\sum_{\alpha\in\Rl} a_\alpha E_{n-1}^{\{\alpha\}}\,dt + 2\sum_{\beta\in\Rl}\frac{a_\beta}{X_\beta} \sum_{\substack{\alpha\in\Rl\\\alpha\neq\beta}}X_\alpha E_{n-1}^{\{\alpha,\beta\}}\,dt + 2\sum_{\beta\in\Rl}a_\beta X_\beta \sum_{\substack{\alpha\in\Rl\\\alpha\neq\beta}} X_\alpha E_{n-2}^{\{\alpha,\beta\}}\,dt.
            \label{eq:full-interface-polynomial-SDE}
    \end{align}
Sums over distinct interface vectors are taken over ordered pairs. Moreover, for $n,m=1,\ldots,M$,
    \begin{equation}
        d[E_n,E_m]_t = 4\sum_{i=1}^N\sigma_i^2 \left(\sum_{\alpha\in\Rl}\alpha_iX_\alpha E_{n-1}^{\{\alpha\}} \right) 
        \left(\sum_{\gamma\in\Rl}\gamma_iX_\gamma E_{m-1}^{\{\gamma\}}\right)dt.
        \label{eq:full-interface-polynomial-bracket}
    \end{equation}
\end{proposition}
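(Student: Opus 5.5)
Our approach is to apply Itô's formula to $E_n$, viewed as a polynomial in the interface coordinates $(X_\alpha)_{\alpha\in\Rl}$, and then reorganise the drift terms. We first compute the semimartingale decomposition of each coordinate. For $\alpha=e_0-v$ we have $X_\alpha=g_t(x)+\sum_i\alpha_i\lambda_i$, since $\alpha_i=-v_i$. Hence \eqref{eq:general-root-particle-system} and \eqref{eq:general-root-adapted-loewner} give
\begin{equation*}
dX_\alpha=\sum_{i}\alpha_i\sigma_i\,dB_i+b_\alpha\,dt+\sum_{\beta\in R_+}k_\beta\frac{\langle\alpha,\beta\rangle}{\langle\beta,\lambda\rangle}\,dt+\sum_{\gamma\in\Rl}\frac{a_\gamma}{X_\gamma}\,dt,
\end{equation*}
with $d[X_\alpha,X_\gamma]=\sum_i\alpha_i\gamma_i\sigma_i^2\,dt$. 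The function $g_t(x)$ has finite variation, so only the particle noise contributes to the brackets.

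Next we record the derivatives of $E_n$. By \ref{prop:symmpoly:eXn}, $\partial E_n/\partial X_\alpha=2X_\alpha E_{n-1}^{\{\alpha\}}$. By \ref{prop:symmpoly:ejX=ejkX}, $\partial^2E_n/\partial X_\alpha^2=2E_{n-1}^{\{\alpha\}}$, and for $\alpha\neq\gamma$, $\partial^2E_n/\partial X_\alpha\partial X_\gamma=4X_\alpha X_\gamma E_{n-2}^{\{\alpha,\gamma\}}$. Substituting into Itô's formula gives the following terms directly:
\begin{itemize}
\item the martingale term and the $b_\alpha$ term;
\item the $\kappa_\alpha E_{n-1}^{\{\alpha\}}$ term, from the diagonal brackets with the factor $\tfrac12$;
\item the off-diagonal $\sigma_i^2$ term, from the ordered-pair sum $\tfrac12\cdot 4$.
\end{itemize}
The quadratic covariation formula \eqref{eq:full-interface-polynomial-bracket} follows immediately from the martingale parts.

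Two groups of drift terms remain. The first is the Loewner drift $2\sum_\alpha X_\alpha E_{n-1}^{\{\alpha\}}\sum_\beta a_\beta/X_\beta$. We split it into $\alpha=\beta$, which gives $2\sum a_\alpha E_{n-1}^{\{\alpha\}}$, and $\alpha\neq\beta$. In the second piece we expand $E_{n-1}^{\{\alpha\}}=E_{n-1}^{\{\alpha,\beta\}}+X_\beta^2E_{n-2}^{\{\alpha,\beta\}}$ using \ref{prop:symmpoly:ejX=ejkX}. This yields exactly the last two terms of \eqref{eq:full-interface-polynomial-SDE}. The zero-weight convention is harmless, because those summands are simply absent.

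The main obstacle is the singular repulsion drift
\begin{equation*}
2\sum_{\beta\in R_+}\frac{k_\beta}{\langle\beta,\lambda\rangle}\sum_{\alpha\in\Rl}\langle\alpha,\beta\rangle X_\alpha E_{n-1}^{\{\alpha\}}.
\end{equation*}
To handle it, we fix $\beta$ and pair $\alpha$ with $\alpha'=s_\beta(\alpha)$, since $s_\beta$ is an involution of $\Rl$. Fixed points contribute nothing, because $\langle\alpha,\beta\rangle=0$ there by \eqref{eq:reflect:inner}. For a pair, \eqref{eq:reflect:inner} turns the contribution into $\langle\alpha,\beta\rangle\bigl(X_\alpha E_{n-1}^{\{\alpha\}}-X_{\alpha'}E_{n-1}^{\{\alpha'\}}\bigr)$. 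We expand both terms via \ref{prop:symmpoly:ejX=ejkX} around the set $\{\alpha,\alpha'\}$, which gives
\begin{equation*}
(X_\alpha-X_{\alpha'})\bigl(E_{n-1}^{\{\alpha,\alpha'\}}-X_\alpha X_{\alpha'}E_{n-2}^{\{\alpha,\alpha'\}}\bigr).
\end{equation*}
By \eqref{eq:reflect:difference}, $X_\alpha-X_{\alpha'}=2\langle\alpha,\beta\rangle\langle\beta,\lambda\rangle/|\beta|^2$, so the denominator $\langle\beta,\lambda\rangle$ cancels. Summing over ordered $\alpha$ counts each pair twice, which compensates the factor $\tfrac12$ from pairing. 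This produces the $k_\beta$ line of \eqref{eq:full-interface-polynomial-SDE}, up to careful bookkeeping of the constant factors.

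Finally, all manipulations are valid for $t<\tau_x\wedge\tau_{\rm coll}$. On that interval the denominators are nonzero and the standing integrability assumptions justify the stochastic integrals.
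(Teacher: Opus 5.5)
Your proposal is correct and follows the paper's proof essentially step for step. Like the paper, you apply It\^o's formula in the interface coordinates, split the Loewner drift at $\alpha=\beta$ using \ref{prop:symmpoly:ejX=ejkX}, and pair $\alpha$ with $s_\beta(\alpha)$ to factor out $X_\alpha-X_{s_\beta(\alpha)}$ and cancel $\langle\beta,\lambda\rangle$; the constants you left to ``careful bookkeeping'' do close up, since the $\tfrac12$ from pairing cancels the $2$ in \eqref{eq:reflect:difference} and leaves exactly the coefficient $2k_\beta/|\beta|^2$.
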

The restriction $s_\beta(\alpha)\neq\alpha$ ensures that the two
deleted interface indices are distinct. Fixed points would contribute
zero because
    \begin{equation*}
        s_\beta(\alpha)=\alpha \quad\Longleftrightarrow\quad \langle\alpha,\beta\rangle=0.
    \end{equation*}
For example, in type $A_{N-1}$,
    \begin{equation*}
        \langle e_0-e_\ell,e_j-e_i\rangle = \delta_{\ell i}-\delta_{\ell j},
    \end{equation*}
which vanishes when $\ell\notin\{i,j\}$.
\begin{proof}
    Since
        \begin{equation*}
            X_\alpha(t,x) {\stackrel{\eqref{eq:interface-coordinate}}{=}} g_t(x)+\sum_{i=1}^N\alpha_i\lambda_i(t),
        \end{equation*}
    the particle {\eqref{eq:general-root-particle-system}} and Loewner equations {\eqref{eq:general-root-adapted-loewner}} give
        \begin{align*}
            dX_\alpha & = \partial_tg_t(x)\,dt + \sum_{i=1}^N\alpha_i\,d\lambda_i \\
                      & = \sum_{\beta\in\Rl}\frac{a_\beta}{X_\beta}\,dt +\sum_{i=1}^N\alpha_i\sigma_i\,dB_i +\sum_{i=1}^N\alpha_i b_i\,dt +\sum_{\beta\in R_+}\frac{k_\beta}{\langle\beta,\lambda\rangle}\left(\sum_{i=1}^N\alpha_i\beta_i\right)dt\\
                      & = \sum_{i=1}^N\alpha_i\sigma_i\,dB_i + b_\alpha\,dt + \sum_{\beta\in R_+}k_\beta \frac{\langle\alpha,\beta\rangle}{\langle\beta,\lambda\rangle}\,dt + \sum_{\beta\in\Rl}\frac{a_\beta}{X_\beta}\,dt.
        \end{align*}
    In the last line, we use the definition of $b_\alpha$ from \eqref{def:kappa:b:alpha}. In particular, independence of the Brownian components yields
        \begin{equation*}
            %\label{eq:interface-quadratic-covariation}
            d[X_\alpha,X_\gamma]_t =\sum_{i=1}^N \alpha_i\gamma_i\sigma_i^2\,dt, \qquad d[X_\alpha]_t=\kappa_\alpha\,dt,
        \end{equation*}
    where we recall the definition of $\kappa_\alpha$ in \eqref{def:kappa:b:alpha}.
    As a polynomial in the interface coordinates, $E_n$, see \eqref{eq:deleted-symmetric-polynomial},satisfies for every $\alpha \in \Rl$ and $\gamma\neq \alpha$
        \begin{equation*}
            \frac{\partial E_n}{\partial X_\alpha} = 2X_\alpha E_{n-1}^{\{\alpha\}}, \qquad \frac{\partial^2 E_n}{\partial X_\alpha^2} = 2E_{n-1}^{\{\alpha\}}, \qquad  \frac{\partial^2 E_n}{\partial X_\alpha\,\partial X_\gamma} = 4X_\alpha X_\gamma E_{n-2}^{\{\alpha,\gamma\}}.
        \end{equation*}
    It\^o's formula therefore gives
        \begin{equation*}
            dE_n = 2\sum_{\alpha\in\Rl} X_\alpha E_{n-1}^{\{\alpha\}}\,dX_\alpha + \sum_{\alpha\in\Rl} E_{n-1}^{\{\alpha\}}\,d[X_\alpha]_t + 2\sum_{\substack{\alpha,\gamma\in\Rl\\\alpha\neq\gamma}} X_\alpha X_\gamma E_{n-2}^{\{\alpha,\gamma\}}\,d[X_\alpha,X_\gamma]_t.
        \end{equation*}
    Substituting the preceding expressions, we obtain
        \begin{align*}
            dE_n ={}& 2\sum_{i=1}^N\sigma_i \left(\sum_{\alpha\in\Rl}\alpha_iX_\alpha E_{n-1}^{\{\alpha\}} \right)dB_i
                + 2\sum_{\alpha\in\Rl} b_\alpha X_\alpha E_{n-1}^{\{\alpha\}}\,dt \\
                & + 2\sum_{\beta\in R_+}\frac{k_\beta}{\langle\beta,\lambda\rangle} \sum_{\alpha\in\Rl} \langle\alpha,\beta\rangle X_\alpha E_{n-1}^{\{\alpha\}}\,dt  + 2\sum_{\beta\in\Rl}\frac{a_\beta}{X_\beta} \sum_{\alpha\in\Rl} X_\alpha E_{n-1}^{\{\alpha\}}\,dt\\
                &+ \sum_{\alpha\in\Rl} \kappa_\alpha E_{n-1}^{\{\alpha\}}\,dt  + 2\sum_{\substack{\alpha,\gamma\in\Rl\\\alpha\neq\gamma}}X_\alpha X_\gamma E_{n-2}^{\{\alpha,\gamma\}}\left(\sum_{i=1}^N \alpha_i\gamma_i\sigma_i^2 \right)dt.
        \end{align*}
    We now simplify the terms in the second line. For the purposes of this simplification only, fix $\beta\in R_+$ and write
    \begin{align}
        \alpha'=s_\beta(\alpha).\label{def:alphaprime}
    \end{align}
    Since $s_\beta$ is an involution permuting $\Rl$, we may pair each summand with its reflected counterpart. Using \eqref{eq:reflect:inner}, this gives
        \begin{align*}
            \sum_{\alpha\in\Rl} \langle\alpha,\beta\rangle X_\alpha E_{n-1}^{\{\alpha\}} & =
            \frac12 \sum_{\substack{\alpha\in\Rl\\\alpha'\neq\alpha}} \left(\langle\alpha,\beta\rangle X_\alpha E_{n-1}^{\{\alpha\}} + \langle\alpha',\beta\rangle X_{\alpha'}E_{n-1}^{\{\alpha'\}} \right)\\
            & = \frac12 \sum_{\substack{\alpha\in\Rl\\\alpha'\neq\alpha}} \langle\alpha,\beta\rangle \left(X_\alpha E_{n-1}^{\{\alpha\}}-X_{\alpha'}E_{n-1}^{\{\alpha'\}} \right).
        \end{align*}
    Fixed points of the reflection contribute zero, since $\alpha'=s_\beta(\alpha)=\alpha$ implies $\langle\alpha,\beta\rangle=0$. For $\alpha'\neq\alpha$, identity~\ref{prop:symmpoly:ejX=ejkX} gives
        \begin{equation*}
            E_{n-1}^{\{\alpha\}}  = E_{n-1}^{\{\alpha,\alpha'\}} + X_{\alpha'}^2 E_{n-2}^{\{\alpha,\alpha'\}}, \qquad
            E_{n-1}^{\{\alpha'\}}  = E_{n-1}^{\{\alpha,\alpha'\}} + X_\alpha^2 E_{n-2}^{\{\alpha,\alpha'\}}.
        \end{equation*}
    Hence,
        \begin{align*}
            X_\alpha E_{n-1}^{\{\alpha\}} -X_{\alpha'}E_{n-1}^{\{\alpha'\}} &= (X_\alpha-X_{\alpha'})E_{n-1}^{\{\alpha,\alpha'\}}+(X_\alpha X_{\alpha'}^2-X_{\alpha'}X_\alpha^2)E_{n-2}^{\{\alpha,\alpha'\}} \\
                & = (X_\alpha-X_{\alpha'}) \left(E_{n-1}^{\{\alpha,\alpha'\}}-X_\alpha X_{\alpha'}E_{n-2}^{\{\alpha,\alpha'\}}\right).
        \end{align*}
        Applying \eqref{eq:reflect:difference}, we conclude that
        \begin{equation*}
            \sum_{\alpha\in\Rl} \langle\alpha,\beta\rangle X_\alpha E_{n-1}^{\{\alpha\}} = \frac{\langle\beta,\lambda\rangle}{|\beta|^2} \sum_{\substack{\alpha\in\Rl\\\alpha'\neq\alpha}} \langle\alpha,\beta\rangle^2 \left(E_{n-1}^{\{\alpha,\alpha'\}}-X_\alpha X_{\alpha'}E_{n-2}^{\{\alpha,\alpha'\}}\right).
        \end{equation*}
    The factor $\langle\beta,\lambda\rangle$ cancels the denominator in the particle-repulsion term, yielding
        \begin{equation*}
            2\sum_{\beta\in R_+}\frac{k_\beta}{\langle\beta,\lambda\rangle}\sum_{\alpha\in\Rl}\langle\alpha,\beta\rangle
            X_\alpha E_{n-1}^{\{\alpha\}} = 2\sum_{\beta\in R_+}\frac{k_\beta}{|\beta|^2} \sum_{\substack{\alpha\in\Rl\\\alpha'\neq\alpha}} \langle\alpha,\beta\rangle^2 \left(E_{n-1}^{\{\alpha,\alpha'\}}-X_\alpha X_{\alpha'}E_{n-2}^{\{\alpha,\alpha'\}}\right)
        \end{equation*}
    which, after using \eqref{def:alphaprime}, is nothing but the third line of \eqref{eq:full-interface-polynomial-SDE}. For the Loewner contribution, first separate the terms with $\alpha=\beta$:
        \begin{equation*}
            \sum_{\beta\in\Rl}\frac{a_\beta}{X_\beta} \sum_{\alpha\in\Rl} X_\alpha E_{n-1}^{\{\alpha\}} = \sum_{\beta\in\Rl} a_\beta E_{n-1}^{\{\beta\}} + \sum_{\beta\in\Rl}\frac{a_\beta}{X_\beta} \sum_{\substack{\alpha\in\Rl\\\alpha\neq\beta}} X_\alpha E_{n-1}^{\{\alpha\}}.
        \end{equation*}
    For $\alpha\neq\beta$, using \ref{prop:symmpoly:ejX=ejkX} gives
        \begin{equation*}
            \sum_{\beta\in\Rl}\frac{a_\beta}{X_\beta} \sum_{\alpha\in\Rl} X_\alpha E_{n-1}^{\{\alpha\}}
            = \sum_{\beta\in\Rl} a_\beta E_{n-1}^{\{\beta\}} + \sum_{\beta\in\Rl}\frac{a_\beta}{X_\beta} \sum_{\substack{\alpha\in\Rl\\\alpha\neq\beta}} X_\alpha E_{n-1}^{\{\alpha,\beta\}} + \sum_{\beta\in\Rl}a_\beta X_\beta \sum_{\substack{\alpha\in\Rl\\\alpha\neq\beta}} X_\alpha E_{n-2}^{\{\alpha,\beta\}}.
        \end{equation*}
    For a positive-weight interface, these calculations are valid because $X_\beta\neq0$ before $\tau_x$. If $a_\beta\equiv0$, its contributions on both sides are zero by convention. Multiplication by the factor $2$ from It\^o's formula gives the last three terms of \eqref{eq:full-interface-polynomial-SDE}. Finally, only the local martingale parts contribute to quadratic covariations, which leads to \eqref{eq:full-interface-polynomial-bracket} and completes the proof.
\end{proof}

\section{Boundary phase transitions in the classical Dyson-Loewner systems}
\label{sec:classical-phase-transitions}

\subsection{Normalized Dyson-Brownian models}

Throughout this section, we assume that $N\geq2$, $\kappa>0$, and that the particle system starts in the interior of its Weyl chamber. We choose
\begin{align*}
    C_A&=\{\lambda\in\R^N:\lambda_1<\cdots<\lambda_N\},\\
    C_B&=\{\lambda\in\R^N:0<\lambda_1<\cdots<\lambda_N\},\\
    C_D&=\{\lambda\in\R^N:|\lambda_1|<\lambda_2<\cdots<\lambda_N\}.
\end{align*}

In \eqref{eq:general-root-particle-system}, we take $\sigma_i=\sqrt{\kappa}$ for $i=1,\ldots,N$, $b_\beta=0$ for every $\beta\in \Rl $ and $k_\alpha=4$ for every $\alpha\in R_+$. The Loewner weights are
    \begin{equation}
        \label{eq:loewner:weights}
        \begin{aligned}
            a_{e_0-e_i}&=2, &&R\in\{A_{N-1},B_N,D_N\},\\
            a_{e_0+e_i}&=2, &&R\in\{B_N,D_N\},\\
            a_{e_0}&=\frac{\kappa}{2}, &&R=B_N.
        \end{aligned}
    \end{equation}
We fix a real initial point $x$ satisfying $X_\alpha(0,x)\neq0$ for every $\alpha\in\mathcal I_R$. The hitting time is
    \begin{equation*}
        \tau_x :=\inf\Big\{ t>0: \liminf_{s\uparrow t}|X_\alpha(s,x)|=0 \text{ for some }\alpha\in\mathcal I_R \Big\}.
    \end{equation*}

Boundary values are understood as limits from within the lifetime of the evolution. Thus, hitting occurs when $g_t(x)$ reaches $\lambda_i(t)$ in type $A_{N-1}$, or $\pm\lambda_i(t)$ in types $B_N$ and $D_N$. In type $B_N$, the origin is also a Loewner singularity. With this normalization, in type $A_{N-1}$ we consider
    \begin{align*}
        %\label{eq:standard-A-dyson} 
        d\lambda_i(t) &= \sqrt{\kappa}\,dB_i(t) + 4\sum_{\substack{j=1\\j\neq i}}^N \frac{dt}{\lambda_i(t)-\lambda_j(t)}, \qquad i=1,\ldots,N, \\ 
        %\label{eq:standard-A-loewner}
        \partial_tg_t(z) &= 2\sum_{i=1}^N \frac{1}{g_t(z)-\lambda_i(t)}. 
    \end{align*}
In type $D_N$, we consider
    \begin{align*}
        %\label{eq:standard-D-dyson}
        d\lambda_i(t) &= \sqrt{\kappa}\,dB_i(t) + 4\sum_{\substack{j=1\\j\neq i}}^N \left(\frac{1}{\lambda_i(t)-\lambda_j(t)}+\frac{1}{\lambda_i(t)+\lambda_j(t)}\right)dt, \qquad i=1,\ldots,N,\\
        %\label{eq:standard-D-loewner}
        \partial_tg_t(z) &= 2\sum_{i=1}^N \left(\frac{1}{g_t(z)-\lambda_i(t)}+\frac{1}{g_t(z)+\lambda_i(t)}\right).
    \end{align*}
Finally, in type $B_N$, we take
    \begin{align}
        %\label{eq:standard-B-dyson}
        d\lambda_i(t) &= \sqrt{\kappa}\,dB_i(t) + \frac{4}{\lambda_i(t)}\,dt  + 4\sum_{\substack{j=1\\j\neq i}}^N \left( \frac{1}{\lambda_i(t)-\lambda_j(t)}+\frac{1}{\lambda_i(t)+\lambda_j(t)}\right)dt, \qquad i=1,\ldots,N,\nonumber\\
\label{eq:standard-B-loewner}      
    \partial_tg_t(z) &= \frac{\kappa}{2g_t(z)} + 2\sum_{i=1}^N\left(\frac{1}{g_t(z)-\lambda_i(t)}+\frac{1}{g_t(z)+\lambda_i(t)}\right).
    \end{align}
In all three cases, the initial condition is $g_0(x)=x\in \R$. 

\medskip

The highest-order polynomial has the explicit form
\begin{equation}
\label{eq:highest-polynomial-three-classical-cases}
E_M(t,x)=
\begin{cases}
    \displaystyle
    \prod_{i=1}^N\bigl(g_t(x)-\lambda_i(t)\bigr)^2,
        &R=A_{N-1},\quad M=N,\\[3mm]
    \displaystyle
    g_t(x)^2
    \prod_{i=1}^N\bigl(g_t(x)^2-\lambda_i(t)^2\bigr)^2,
        &R=B_N,\quad M=2N+1,\\[3mm]
    \displaystyle
    \prod_{i=1}^N\bigl(g_t(x)^2-\lambda_i(t)^2\bigr)^2,
        &R=D_N,\quad M=2N.
\end{cases}
\end{equation}
Put $\zeta_x:=\tau_x\wedge\tau_{\rm coll}$. In type $B_N$, the initial admissibility condition implies $x\neq0$. Set $G_t:=g_t(x)^2$. The Loewner equation gives
    \begin{equation}
        \label{eq:type-B-squared-Loewner-flow}
        \partial_tG_t =\kappa+8G_t\sum_{i=1}^N \frac{1}{G_t-\lambda_i(t)^2}.
    \end{equation}
Before $\tau_{\rm coll}$, all particle coordinates are nonzero. As long as the moving denominators remain nonzero, the right-hand side extends continuously to $G_t=0$, where its  value is $\kappa>0$. A solution starting from $G_0=x^2>0$ therefore cannot first reach zero under these conditions. Consequently, the origin cannot be reached before a moving Loewner singularity and before $\tau_{\rm coll}$. In type $D_N$, the Loewner equation can be written as
    \begin{equation}
        \label{eq:type-D-g-flow}
        \partial_tg_t(x) =4g_t(x)\sum_{i=1}^N \frac{1}{g_t(x)^2-\lambda_i(t)^2}.
    \end{equation}
If $x\neq0$, this equation shows that $g_t(x)$ preserves its sign on $[0,\zeta_x)$. Moreover, it cannot reach zero at a finite time before $\tau_{\rm coll}$ while all moving denominators remain nonzero. It follows that, whenever $E_M(0,x)>0$,
    \begin{equation*}
        %\label{eq:tau-x-highest-polynomial}
        \tau_x\wedge\tau_{\rm coll} = \inf\left\{t>0:\liminf_{s\uparrow t}E_M(s,x)=0 \right\}\wedge\tau_{\rm coll}.
    \end{equation*}
In type $D_N$, the initial point $x=0$ is admissible precisely when
$\lambda_1(0)\neq0$. In this case,
    \begin{equation*}
        g_t(0)=0, \qquad E_{2N}(t,0)=\prod_{i=1}^N\lambda_i(t)^4>0, \qquad t<\zeta_0.
    \end{equation*}
Its hitting time is described in
Proposition~\ref{prop:type-D-central-component}.

\subsection{Boundary phase transitions results}

Here are the main results of the section. We begin with the transition corresponding to $\kappa = 8$, which is just a rescaling of the well-known result for the Dyson particle systems. 
\begin{proposition}
    \label{prop:classical-particle-transition}
    For the classical particle systems specified above, $\tau_{\rm coll}=\infty$ almost surely for $0<\kappa\leq8$ and $\tau_{\rm coll}<\infty$ almost surely for $\kappa>8$.
\end{proposition}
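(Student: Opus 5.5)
Dividing the particle equations by $\sqrt{\kappa}$ reduces everything to the standard radial Dunkl/Dyson normalization. Put $\mu(t):=\lambda(t)/\sqrt{\kappa}$. Then each classical system becomes
\begin{equation*}
    d\mu_i=dB_i+\sum_{\alpha\in R_+}\frac{4}{\kappa}\,\frac{\alpha_i}{\langle\alpha,\mu\rangle}\,dt ,
\end{equation*}
so $\mu$ is a radial Dunkl process with constant multiplicity $k\equiv 4/\kappa$ on every orbit of $R_+$. For type $B_N$ this includes the short roots $e_i$, whose coefficient $4/\lambda_i$ also becomes $(4/\kappa)/\mu_i$. The collision time is unchanged by this scaling. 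It therefore suffices to show that the radial Dunkl process with multiplicity $k$ does not reach $\partial C_R$ when $k\geq1/2$ and reaches it almost surely in finite time when $k<1/2$. In our normalization $k\geq 1/2$ is exactly $\kappa\leq 8$, matching $\beta=8/\kappa\geq1$ in Dyson's convention. This is the classical criterion of Cépa--Lépingle and Chybiryakov, which the paper invokes as "well known." I would still sketch a self-contained argument, using only two observables.

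\textbf{Step 1: non-collision for $k\ge 1/2$.} Let $\pi(\mu):=\prod_{\alpha\in R_+}\langle\alpha,\mu\rangle$ and apply Itô's formula to $\log\pi(\mu)$ up to $\tau_{\rm coll}$. Since $\pi$ is $W_R$-skew and harmonic, $\nabla\log\pi=\sum_\alpha \alpha/\langle\alpha,\mu\rangle$ and $\Delta\log\pi=-\sum_\alpha|\alpha|^2/\langle\alpha,\mu\rangle^2$. The drift of $\log\pi$ is therefore
\begin{equation*}
    k|\nabla\log\pi|^2-\tfrac12\sum_\alpha\frac{|\alpha|^2}{\langle\alpha,\mu\rangle^2}.
\end{equation*}
Expanding $|\nabla\log\pi|^2$, the cross terms $\sum_{\alpha\neq\beta}\langle\alpha,\beta\rangle/(\langle\alpha,\mu\rangle\langle\beta,\mu\rangle)$ vanish. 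This is the standard identity that follows from harmonicity of $\pi$, equivalently from the cancellation over rank-two subsystems. The drift thus becomes $(k-\tfrac12)\sum_\alpha|\alpha|^2/\langle\alpha,\mu\rangle^2\geq0$ when $k\ge1/2$. Hence $\log\pi(\mu_t)$ is a local submartingale. Combining this with the fact that $|\mu|^2$ has bounded drift, so the process cannot explode, the standard McKean argument shows that $\log\pi$ cannot reach $-\infty$ in finite time. This gives $\tau_{\rm coll}=\infty$ almost surely.

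\textbf{Step 2: collision for $k<1/2$.} Here I would use the fact that the pairwise (or single-coordinate) gaps near a wall behave like Bessel processes of dimension $2k|\alpha|^2/|\alpha|^2\cdot\ldots$; concretely, for a simple root $\alpha$, $\langle\alpha,\mu\rangle/|\alpha|$ is a Bessel-type process of dimension $1+2k<2$ perturbed by a drift that is locally bounded away from the other walls. A Girsanov or comparison argument then shows the wall is hit with positive probability in finite time. Scaling invariance of the process (Brownian scaling preserves the law up to the starting point) and the Markov property upgrade positive probability to almost sure hitting. An alternative route is to use $\pi^{1-2k}$, which is a local martingale-type function, together with a bound on the hitting time of $|\mu|^2$, whose drift is constant, equal to $N+2k|R_+|$ with our normalization.

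\textbf{Main obstacle.} The main obstacle is Step 2. Turning positive probability into probability one requires a uniform lower bound over starting points, which I would obtain from scale invariance restricted to the unit sphere intersected with the chamber. This bound is delicate near lower-dimensional faces. In practice I would cite the known result at this point (Cépa--Lépingle for type $A$, Demni/Chybiryakov for $B$ and $D$) rather than reprove it.
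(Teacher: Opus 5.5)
Your proposal is correct and follows the paper's proof. Rescaling by $\sqrt{\kappa}$ turns each classical system into a radial Dunkl process with constant multiplicity $4/\kappa$, and the known boundary classification at multiplicity $1/2$ (which the paper cites from Demni and L\'epingle) gives the threshold $\kappa=8$. Your self-contained $\log\pi$ local-submartingale argument for the non-collision half is a correct optional supplement. For the collision half you, like the paper, ultimately rely on the cited classification.
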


\begin{proof}
    Under the standard normalization and defining $\widehat\lambda(t) = \frac{\lambda(t)}{\sqrt{\kappa}}$, the driving particle system can be written in the common root-system form
        \begin{equation*}
            %\label{eq:rescaled-radial-Dunkl-transition-proof}
            d\widehat\lambda(t) = dB(t) + \frac{4}{\kappa} \sum_{\alpha\in R_+} \frac{\alpha}{\inner{\alpha}{\widehat\lambda(t)} }\,dt,
        \end{equation*}
    which is a radial Dunkl process whose multiplicity at every root is equal to $4/\kappa$. The boundary classification for radial Dunkl processes states that a Weyl-chamber wall is inaccessible when its multiplicity is at least $1/2$, whereas the chamber boundary is reached almost surely in finite time when the multiplicity is strictly smaller than $1/2$; see \cite{Demni2009,Lepingle2010}. This ends the proof. 
\end{proof}
    
The next theorem describes the Loewner-interface transition at $\kappa=4$ in the range where particle collisions are absent. {We present the proof of the following two statements at the end of this section.}

\begin{theorem}
    \label{thm:classical-interface-transition}
    Assume $0<\kappa\leq8$ and $E_M(0,x)>0$. In type $D_N$, assume additionally that $|x|>|\lambda_1(0)|$. Then, $\tau_x=\infty$ almost surely for $0<\kappa\leq4$, whereas $\tau_x<\infty$ almost surely for $4<\kappa\leq8$.
\end{theorem}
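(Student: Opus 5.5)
The plan is to reduce the problem to a one-dimensional Bessel-type diffusion by looking at $\log E_M(t,x)$ rather than at $E_M$ itself. First, since $0<\kappa\le 8$, Proposition~\ref{prop:classical-particle-transition} gives $\tau_{\rm coll}=\infty$ almost surely. The discussion after \eqref{eq:highest-polynomial-three-classical-cases} then shows that $\tau_x=\zeta_x$ is exactly the first time at which $\liminf E_M=0$. In type $B_N$ this uses the fact that the origin cannot be reached first. In type $D_N$, the hypothesis $|x|>|\lambda_1(0)|$ places us in a noncentral component, where $g_t(x)$ keeps its sign.

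Next I would apply Itô's formula to $L_t:=\log E_M(t,x)=\sum_{\alpha\in\mathcal I_R}\log X_\alpha^2$. Equivalently, I would divide \eqref{eq:full-interface-polynomial-SDE} with $n=M$ by $E_M$, using \ref{prop:eN} to write $E_{M-1}^{\{\alpha\}}/E_M=X_\alpha^{-2}$ and $E_{M-2}^{\{\alpha,\gamma\}}/E_M=X_\alpha^{-2}X_\gamma^{-2}$. In type $A_{N-1}$ the computation is short. The Loewner cross terms contribute $4\sum_{i\neq j}(X_iX_j)^{-1}$. The repulsion terms contribute $-8\sum_{i\neq j}X_i^{-1}(X_j-X_i)^{-1}$, and pairing $i\leftrightarrow j$ turns this into $-4\sum_{i\ne j}(X_iX_j)^{-1}$. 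The two cancel exactly. What remains is
\[
dL_t=(4-\kappa)A'_t\,dt+dM_t,\qquad A'_t:=\sum_{\alpha}\frac{1}{X_\alpha^2},\qquad d[M]_t=4\kappa A'_t\,dt,
\]
where the noise coefficients come from the $\alpha_i=\pm1$ structure; in types $B,D$ the quadratic variation should still be a constant multiple of $A'_t$ (possibly up to the cross terms, which must be checked). I would verify the analogous cancellation in types $B_N$ and $D_N$ using the third line of \eqref{eq:full-interface-polynomial-SDE}. In type $B_N$ the additional weight $a_{e_0}=\kappa/2$ and the short-root repulsion $4/\lambda_i$ should be exactly what makes the drift again a constant multiple of $A'_t$, with the same sign change at $\kappa=4$. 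Checking this bookkeeping, particularly the pairs with $s_\beta(\alpha)=\alpha$ and the Loewner terms involving $e_0$, is routine but is where errors could creep in.

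With the clock $s=A_t:=\int_0^tA'_u\,du$, the Dambis--Dubins--Schwarz theorem gives $L=(4-\kappa)s+2\sqrt\kappa\,W_s$. This means $\exp(\tfrac{c}{\kappa}L)$, for a suitable constant, is a time-changed squared Bessel process, or equivalently some power of $E_M$ is a time-changed Bessel process of dimension $\delta=1+4/\kappa$. Because $E_M\to0$ forces some $X_\alpha\to0$ and hence $A'\to\infty$, the two regimes follow from the classical Bessel dichotomy:
\begin{itemize}[label={}]
\item \emph{Case $\kappa\le4$, so $\delta\ge2$.} Zero is polar, so $E_M$ cannot reach $0$ at any finite clock time. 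It also cannot have $\liminf 0$ as $s\uparrow\infty$ at a finite real time, because a Bessel process with $\delta\ge2$ does not approach zero along unbounded clock times in the required way on compacts. I would still need to argue that $A_t<\infty$ for every finite $t$, which should follow since the gaps stay away from $0$ on compacts. Hence $\tau_x=\infty$.
\item \emph{Case $4<\kappa\le8$, so $\delta<2$.} The Bessel process hits $0$ at an almost surely finite clock time $s^*$.
\end{itemize}

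The main obstacle is converting the finite clock time $s^*$ in the second case into a finite real time. We have $t(s^*)=\int_0^{s^*}ds/A'_{t(s)}$, and $1/A'\le\min_\alpha X_\alpha^2$. The first thing I would try is a growth bound $\max_\alpha X_\alpha(t)^2\le C(1+t+\sup_{u\le t}|\lambda(u)|^2)$, obtained from the Loewner equation and the growth of the Dyson drivers. Combined with a Gronwall-type comparison in $s$, this should exclude $t(s)\to\infty$ before $s^*$. If that fails, my fallback is to use Brownian scaling of the whole system together with a 0--1 argument to show $\mathbb P(\tau_x<\infty)=1$.
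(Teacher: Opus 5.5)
Your type $A_{N-1}$ computation is correct: $\log E_M$ has drift $(4-\kappa)\sum_iX_i^{-2}$ and bracket density $4\kappa\sum_iX_i^{-2}$, which is the Lamperti form of the paper's squared-Bessel representation. The overall architecture, a Bessel dichotomy plus a clock-exhaustion step, is also the paper's. There are, however, two genuine gaps.

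\textbf{Types $B_N$ and $D_N$.} Your claim that the drift and bracket of $\log E_M$ are again multiples of $\sum_\alpha X_\alpha^{-2}$ is false, and not for bookkeeping reasons. The cross-covariation $d[X_{e_0-e_i},X_{e_0+e_i}]_t=-\kappa\,dt$ changes the bracket, and the drift acquires a term with no sign. With $h_i=g^2-\lambda_i^2$, in type $D_N$,
\[
d\log E_M=-4\sqrt{\kappa}\sum_{i=1}^N\frac{\lambda_i}{h_i}\,dB_i+4(4-\kappa)\sum_{i=1}^N\frac{\lambda_i^2}{h_i^2}\,dt+2(8-\kappa)\sum_{i=1}^N\frac{dt}{h_i}.
\]
So the bracket density is $16\kappa\sum_i\lambda_i^2/h_i^2$. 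In type $B_N$ the last term becomes $(\kappa/g^2+8\sum_ih_i^{-1})\,dt$. Your structure cannot be right: it would give $\tau_x=\infty$ for $\kappa\le4$ also in the central component of $D_N$, contradicting Proposition~\ref{prop:type-D-central-component}. The missing idea is that, by the Loewner equation, the extra term is a constant multiple of $\partial_t\log g_t(x)^2$. It is removed by the finite-variation factor $I_R$, a power of $|g_t(x)/x|$. Hence the time-changed $\mathrm{BESQ}(1+4/\kappa)$ is $I_RE_M$, not a power of $E_M$. You must then show that $I_R$ has a finite positive limit at a finite $\tau_x$, i.e.\ that the first hit is not at the origin. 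That is what the noncentral hypothesis in type $D_N$ is for; sign preservation of $g_t(x)$ holds for every $x\neq0$.

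\textbf{The case $4<\kappa\le8$.} This step is not closed, and you conflate two clocks. In the clock $A_t$, $\log E_M$ is a Brownian motion with negative drift, which reaches $-\infty$ only as $A\to\infty$. The finite zero time $T_0(Z)$ exists only in the squared-Bessel clock (in type $A_{N-1}$, $\kappa\int_0^tE_M\sum_\alpha X_\alpha^{-2}\,du$). What must be excluded is that real time runs to $\infty$ while this clock saturates at $S=T_0(Z)$.

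A polynomial growth bound on the gaps plus Gronwall cannot do this. At best it gives $\log(1+t(s))\lesssim\int_0^sZ_u^{-1}\,du$, and $\int_0^{T_0(Z)}Z_u^{-1}\,du=\infty$. You need a lower bound on the clock speed by a sublinear power of $E_M$.

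In type $A_{N-1}$ you were one step away. Since $\sum_\alpha X_\alpha^{-2}\ge(\min_\alpha X_\alpha^2)^{-1}\ge E_M^{-1/M}$, in your clock $\zeta_x\le\int_0^\infty e^{L_b/M}\,db<\infty$ almost surely, because $L_b$ has drift $(4-\kappa)b\to-\infty$. This is the paper's Maclaurin bound $E_{N-1}\ge NE_N^{(N-1)/N}$ in disguise.

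In types $B_N$ and $D_N$, the analogous AM--GM bound brings in $\prod_i\lambda_i(t)^2$, which degenerates. Lemma~\ref{lem:classical-time-exhaustion} therefore needs a genuinely probabilistic argument:
\begin{itemize}
\item a finite weighted integral of $\prod_i\lambda_i^2$;
\item a negative-moment estimate for the radial Dunkl process;
\item H\"older's inequality;
\item the divergence $\int^\infty dt/|\lambda(t)|^2=\infty$.
\end{itemize}
Nothing in your proposal replaces this, and the scaling/0--1 fallback is not an argument.

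A minor point: at $\kappa=4$, $\mathrm{BES}(2)$ does come arbitrarily close to $0$ along unbounded times. The correct argument uses that $E_M$ actually converges to $0$ at a finite $\tau_x$, which a driftless time-changed Brownian motion $\log E_M$ cannot do.
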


For type $D_N$, initial points in the central component
satisfy a different boundary classification.

\begin{proposition}
    \label{prop:type-D-central-component}
    Assume $R=D_N$, $0<\kappa\leq8$, and $|x|<|\lambda_1(0)|$. Then,
        \begin{equation*}
            \tau_x<\infty \quad\text{and}\quad \tau_{\rm coll}=\infty \qquad\text{almost surely}.
        \end{equation*}
    In particular, this conclusion applies to $x=0$.
\end{proposition}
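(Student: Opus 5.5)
The plan is to reduce the statement to a recurrence property of the single coordinate $\lambda_1$, using the fact that in the central component $g_t(x)$ is trapped between $-|\lambda_1(t)|$ and $|\lambda_1(t)|$.

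\emph{Non-collision.} The assertion $\tau_{\rm coll}=\infty$ almost surely is exactly Proposition~\ref{prop:classical-particle-transition} for type $D_N$ with $0<\kappa\leq8$. Hence the particle system is defined for all times, and it suffices to show $\tau_x<\infty$ almost surely.

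\emph{Trapping.} For $t<\zeta_x$, the quantity $\lambda_1(t)^2-G_t$, with $G_t=g_t(x)^2$, is continuous. It starts positive because $|x|<|\lambda_1(0)|$. It can vanish only when $g_t(x)=\pm\lambda_1(t)$, which is a zero of $X_{e_0\mp e_1}$, that is, at $\tau_x$. Therefore
\begin{equation*}
|g_t(x)|<|\lambda_1(t)| \qquad\text{for all } t<\tau_x .
\end{equation*}
Let $\sigma:=\inf\{t>0:\lambda_1(t)=0\}$. If $\tau_x>\sigma$, the trapping inequality would force $g_\sigma(x)=0=\lambda_1(\sigma)$, so $X_{e_0-e_1}$ would vanish at $\sigma$, a contradiction. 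Hence $\tau_x\leq\sigma$. In the case $x=0$ we have $g_t(0)=0$ identically, so $\tau_0=\sigma$.

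\emph{Recurrence of $\lambda_1$.} It remains to show that $\sigma<\infty$ almost surely. In type $D_N$, combining the pairwise repulsion terms gives
\begin{equation*}
d\lambda_1=\sqrt{\kappa}\,dB_1-8\lambda_1\sum_{j=2}^N\frac{dt}{\lambda_j^2-\lambda_1^2}.
\end{equation*}
In the chamber $C_D$ we have $\lambda_j^2>\lambda_1^2$ for $j\geq2$, so the drift always points toward the origin. Assume without loss of generality that $\lambda_1(0)>0$; the case $\lambda_1(0)<0$ follows by the symmetry $\lambda_1\mapsto-\lambda_1$. Then on $[0,\sigma)$,
\begin{equation*}
\lambda_1(t)\leq\lambda_1(0)+\sqrt{\kappa}\,B_1(t).
\end{equation*}
Since one-dimensional Brownian motion hits the level $-\lambda_1(0)/\sqrt{\kappa}$ in finite time almost surely, $\lambda_1$ reaches $0$ in finite time, so $\sigma<\infty$ and hence $\tau_x<\infty$ almost surely.

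\emph{Main obstacle.} The delicate point is the trapping step. One should not instead compute the local dynamics of $Y=\lambda_1^2-G$ near $Y=0$ while $G$ stays positive. That calculation gives a Bessel-like repulsion of dimension $1+4/\kappa$, which suggests non-hitting for $\kappa\leq4$. The finiteness of $\tau_x$ comes instead from the global squeeze caused by $\lambda_1$ returning to $0$, and not from local hitting of one moving pole. One also needs to confirm that no other interface ($g=\pm\lambda_j$ with $j\geq2$) can be reached before $\pm\lambda_1$. This follows from the trapping inequality together with $|\lambda_1|<\lambda_j$, and in any case such an event would only make $\tau_x$ smaller.
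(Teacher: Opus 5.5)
Your proof is correct and follows the same route as the paper: $\tau_{\rm coll}=\infty$ from Proposition~\ref{prop:classical-particle-transition}, then the trapping $|g_t(x)|<|\lambda_1(t)|$, which forces $\tau_x\leq\inf\{t>0:\lambda_1(t)=0\}$, and finally finiteness of that zero time of $\lambda_1$. The only difference is in the last step, where you use the pathwise comparison $\lambda_1(t)\leq\lambda_1(0)+\sqrt{\kappa}\,B_1(t)$ (drift pointing toward the origin) instead of the paper's integrating factor $J\geq1$ and the Dambis--Dubins--Schwarz representation of $J\lambda_1$; both rest on the same sign fact $\lambda_j^2>\lambda_1^2$ for $j\geq2$, and your version is slightly more elementary.
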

We first establish the squared-Bessel representation and the exhaustion of its intrinsic time, and then prove the two interface results. When $E_M(0,x)>0$, define 

    \begin{equation}
        \label{eq:classical-integrating-factor}
        I_R(t,x):=
            \begin{cases}
                1, &R=A_{N-1},\\[1mm] 
                \displaystyle
                \left|\frac{g_t(x)}{x}\right|^{-2}, &R=B_N,\\[3mm]
                \displaystyle
                \exp\left\{ 2(\kappa-8)\int_0^t\sum_{i=1}^N\frac{du}{g_u(x)^2-\lambda_i(u)^2}\right\},&R=D_N.
            \end{cases}
    \end{equation}
These processes have finite variation, are finite and strictly positive for $t<\zeta_x$, and satisfy $I_R(0,x)=1$. For $R=D_N$ and $x\neq0$, equation \eqref{eq:type-D-g-flow} also gives
    \begin{equation*}
        I_D(t,x) = \left|\frac{g_t(x)}{x}\right|^{(\kappa-8)/2}.
    \end{equation*}

For $0\leq t<\zeta_x$, define the intrinsic time by
    \begin{equation}
        \label{eq:unified-highest-time-change}
        s(t,x) :=
            \begin{cases}
                \displaystyle 
                \kappa \int_0^t I_R(u,x)E_M(u,x)\sum_{i=1}^N\frac{1}{\bigl(g_u(x)-\lambda_i(u)\bigr)^2}\,du,& R=A_{N-1},\\[8mm]
                \displaystyle
                \kappa \int_0^t I_R(u,x)E_M(u,x)\left(4\sum_{i=1}^N\frac{\lambda_i(u)^2}{\bigl(g_u(x)^2-\lambda_i(u)^2\bigr)^2}\right)\,du,& R\in\{B_N,D_N\}.
            \end{cases}
    \end{equation}
In type $A_{N-1}$, we have
    \begin{equation*}
        E_M(t,x) \sum_{i=1}^N \frac{1}{\bigl(g_t(x)-\lambda_i(t)\bigr)^2} = E_{M-1}(t,x),\quad     s(t,x) = \kappa \int_0^t E_{M-1}(u,x)\,du.
    \end{equation*}

%%%%%%%%%%%%% PROPOSITION %%%%%%%%%%%%%%
\begin{proposition}
    \label{thm:classical-highest-BESQ}
    Let $\kappa>0$ and suppose that $E_M(0,x)>0$. Then, almost surely, $s(\cdot,x)$ is continuous and strictly increasing on $[0,\zeta_x)$. On an enlargement of the probability space if necessary, there exists a squared Bessel process $Z$ with starting point $Z_0$ and dimension $\delta$ given by
        \begin{equation*}
            %\label{eq:classical-highest-BESQ-dimension}
            Z_0=E_M(0,x), \qquad \delta=1+\frac4\kappa,
        \end{equation*}
    such that
        \begin{equation}
            \label{eq:classical-highest-BESQ-representation}
            I_R(t,x)E_M(t,x)=Z_{s(t,x)}, \qquad 0\leq t<\zeta_x.
        \end{equation}
    Consequently, $s(\cdot,x)$ maps $[0,\zeta_x)$ bijectively onto $\left[0,\lim_{t\uparrow\zeta_x}s(t,x)\right)$.
\end{proposition}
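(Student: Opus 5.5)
The approach is to show that $Y_t:=I_R(t,x)E_M(t,x)$ is a continuous semimartingale on $[0,\zeta_x)$ satisfying
\begin{equation*}
dY_t = 2\sqrt{Y_t}\,dM_t + \delta\,ds(t,x), \qquad d[M]_t = ds(t,x), \qquad \delta=1+\frac4\kappa,
\end{equation*}
for some continuous local martingale $M$. The representation \eqref{eq:classical-highest-BESQ-representation} then follows from a Dambis--Dubins--Schwarz time change.

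\textbf{Step 1: Itô computation for $E_M$.} I apply Proposition~\ref{prop:En:SDE} with $n=M$ and use the conventions $E_{M-1}^{\{\alpha,\gamma\}}=0$ and identity \ref{prop:eN}.

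\emph{The bracket.} By \eqref{eq:full-interface-polynomial-bracket} and \ref{prop:eN}, $X_\alpha^2E_{M-1}^{\{\alpha\}}=E_M$. In type $A_{N-1}$ the interface vectors $e_0-e_i$ have disjoint particle supports, so
\begin{equation*}
d[E_M]_t=4\kappa\sum_i X_i^2\bigl(E_{M-1}^{\{i\}}\bigr)^2dt=4\kappa E_ME_{M-1}\,dt=4E_M\,ds.
\end{equation*}

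\emph{The drift in type $A_{N-1}$.} The contributions are as follows.
\begin{itemize}[leftmargin=1.5cm]
\item The $\kappa_\alpha$ term gives $\kappa E_{M-1}$.
\item The off-diagonal $\sigma^2$ term vanishes, because $\alpha_i\gamma_i=0$ for $\alpha\neq\gamma$.
\item The first Loewner term gives $4E_{M-1}$.
\item The middle Loewner term vanishes, because $E_{M-1}^{\{\alpha,\beta\}}=0$.
\item The repulsion term (with $k_\beta=4$, $|\beta|^2=2$, and each root $e_j-e_i$ moving exactly the ordered pair $\{e_0-e_i,e_0-e_j\}$) gives $-8\sum_{i<j}X_iX_jE_{M-2}^{\{i,j\}}$.
\item The last Loewner term gives $+8\sum_{i<j}X_iX_jE_{M-2}^{\{i,j\}}$.
\end{itemize}
The last two cancel exactly, so the drift is $(\kappa+4)E_{M-1}\,dt=\delta\,ds$. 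Since $I_A\equiv1$, this settles type $A_{N-1}$.

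\emph{Types $B_N$ and $D_N$.} I repeat the bookkeeping. The interface set now contains the pairs $e_0\pm e_i$, and in type $B_N$ also $e_0$. Consequently, the off-diagonal $\sigma^2$ term no longer vanishes: $\alpha_i\gamma_i=-1$ for the pair $(e_0-e_i,e_0+e_i)$. The roots $e_i$ (type $B$) and $e_j\pm e_i$ now act, and each reflection moves two pairs. I expect the repulsion and cross Loewner terms again to cancel pairwise, leaving drift and bracket as explicit multiples of $E_M$ times rational functions of $g$ and $\lambda_i^2$. Writing $X_{\pm i}=g\mp\lambda_i$, the quadratic-variation density becomes
\begin{equation*}
4\kappa E_M\sum_i\frac{4\lambda_i^2}{(g^2-\lambda_i^2)^2},
\end{equation*}
since $X_{-i}^{-1}-X_{+i}^{-1}$ produces $2\lambda_i/(g^2-\lambda_i^2)$. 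The residual drift, not proportional to this, should be of the form $c\,E_M\sum_i(g^2-\lambda_i^2)^{-1}$ in type $D$, and $c'E_M/g^2$ plus similar terms in type $B$.

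\textbf{Step 2: the integrating factor.} The factor $I_R$ is designed to kill this residual drift. Since $I_R$ has finite variation, $d(I_RE_M)=I_R\,dE_M+E_M\,dI_R$. I verify using \eqref{eq:type-B-squared-Loewner-flow} and \eqref{eq:type-D-g-flow}, or the explicit exponential in \eqref{eq:classical-integrating-factor}, that $E_M\,dI_R$ cancels the residual term. What remains is
\begin{equation*}
dY=2\sqrt{Y}\,dM+\delta\,ds, \qquad d[M]=ds,
\end{equation*}
where $ds=\kappa I_RE_M\cdot(\cdots)\,dt$ is exactly \eqref{eq:unified-highest-time-change}. This algebra in types $B_N$ and $D_N$ is the main obstacle. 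I would organize it by writing everything in terms of the per-$i$ factor $(g^2-\lambda_i^2)^2$ and using that, after the cancellations, only the diagonal pairs $\{e_0-e_i,e_0+e_i\}$ survive.

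\textbf{Step 3: monotonicity of $s$ and the time change.} Before $\zeta_x$ we have $E_M>0$ and $I_R>0$, and the sum in the integrand of $s$ is strictly positive: in types $B$ and $D$ one has $\lambda_i\neq0$ for some $i$ before $\tau_{\rm coll}$. Hence $s(\cdot,x)$ is continuous and strictly increasing. Let $T$ be its inverse on $[0,s(\zeta_x-))$, and set $Z_r:=Y_{T(r)}$. By Dambis--Dubins--Schwarz, applied to $\int\sqrt{Y}\,dM$ after normalizing, there is a Brownian motion $W$ on an enlarged space such that
\begin{equation*}
Z_r=Z_0+2\int_0^r\sqrt{Z_u}\,dW_u+\delta r, \qquad r<s(\zeta_x-).
\end{equation*}
On $\{s(\zeta_x-)<\infty\}$, I extend $Z$ beyond $s(\zeta_x-)$ by gluing the solution of the squared Bessel SDE driven by an independent Brownian motion, started from $\lim Z$. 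This limit exists because $Z$ is a continuous solution up to that time. Pathwise uniqueness for the squared Bessel SDE then identifies $Z$ as a $\mathrm{BESQ}^\delta$ process. The bijectivity statement is immediate from continuity and strict monotonicity.
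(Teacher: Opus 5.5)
Your route is the paper's: specialize Proposition~\ref{prop:En:SDE} to $n=M$, let the root repulsion cancel the cross Loewner terms, remove the leftover drift with the finite-variation factor $I_R$, then time-change. Your type-$A_{N-1}$ bookkeeping in the $E_{M-1},E_{M-2}$ form is correct and complete. The gap is in types $B_N$ and $D_N$, which are two of the three cases and the bulk of the paper's proof. There you never carry out the computation. You say you ``expect'' the cancellations, and then argue that $I_R$ ``is designed to'' kill the residual, which assumes what has to be proved.

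The proposition holds there only because the constants match exactly. Writing $h_i=g^2-\lambda_i^2$, two things must be checked. First, the coefficient of $E_M\sum_i\lambda_i^2h_i^{-2}$ in the drift must be exactly $4(\kappa+4)$; otherwise the drift of $I_RE_M$ is not $(1+4/\kappa)\,ds$. Second, the residual must be exactly $E_M\bigl(\kappa/g^2+8\sum_ih_i^{-1}\bigr)$ in type $B_N$ and $2(8-\kappa)E_M\sum_ih_i^{-1}$ in type $D_N$. These must be cancelled by $dI_B/I_B=-\bigl(\kappa/g^2+8\sum_ih_i^{-1}\bigr)dt$, which follows from \eqref{eq:standard-B-loewner}, and by $dI_D/I_D=2(\kappa-8)\sum_ih_i^{-1}\,dt$.

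Your organizing heuristic is also wrong in type $B_N$, so following it would give a residual that $I_B$ does not cancel.

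\begin{itemize}
\item No reflection moves $e_0$. The cross Loewner terms between $e_0$ (weight $\kappa/2$) and $e_0\pm e_i$ therefore survive untouched and contribute $2(\kappa+4)E_M\sum_ih_i^{-1}$.
\item The diagonal $e_0$ term contributes $\kappa E_M/g^2$.
\item The pair $\{e_0-e_i,e_0+e_i\}$ is exchanged by the short root $e_i$, which moves one pair, not two. Its repulsion $-16E_M/h_i$ overcompensates the Loewner term $+8E_M/h_i$.
\end{itemize}

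So it is not true that ``only the diagonal pairs $\{e_0-e_i,e_0+e_i\}$ survive''; that holds only in type $D_N$. There the surviving pairs give $8E_M/h_i$ from the Loewner terms and $-4\kappa E_M/h_i$ from the off-diagonal covariance. Combined with the diagonal terms $(\kappa+4)E_M\bigl(2h_i^{-1}+4\lambda_i^2h_i^{-2}\bigr)$, this produces the stated residual.

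A clean way to organize the calculation, as the paper does, is to divide by $E_M$. The diagonal and off-diagonal Loewner terms then combine into $2\bigl(\sum_\alpha X_\alpha^{-1}\bigr)\bigl(\sum_\gamma a_\gamma X_\gamma^{-1}\bigr)$. The long roots remove exactly the $i\neq j$ cross terms, leaving $16g^2\sum_ih_i^{-2}=16\sum_ih_i^{-1}+16\sum_i\lambda_i^2h_i^{-2}$.

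Two smaller points.

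\begin{itemize}
\item Your displayed ``quadratic-variation density'' $4\kappa E_M\sum_i4\lambda_i^2h_i^{-2}$ is $d[E_M]_t/(E_M\,dt)$, not $d[E_M]_t/dt$; the latter carries $E_M^2$.
\item In Step 3, continuity of $Z$ on $[0,S)$ does not by itself give $\lim_{r\uparrow S}Z_r$. Use that $Z_r-\delta r$ is a continuous local martingale on $[0,S)$ bounded below by $-\delta S$. Its quadratic variation is therefore finite at $S$, so it converges. Alternatively, invoke pathwise uniqueness for the squared Bessel SDE driven by the extended Brownian motion.
\end{itemize}
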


\begin{proof}
We apply Proposition~\ref{prop:En:SDE} with $n=M$, $\sigma_i=\sqrt{\kappa}$, $b_\alpha=0$, and $k_\beta=4$. All interface coordinates are nonzero for $t<\zeta_x$, and
    \begin{equation*}
        E_{M-1}^{\{\alpha\}} =\frac{E_M}{X_\alpha^2}, \qquad E_{M-1}^{\{\alpha,\gamma\}}=0, \qquad E_{M-2}^{\{\alpha,\gamma\}} =\frac{E_M}{X_\alpha^2X_\gamma^2}, \qquad \alpha\neq\gamma.
    \end{equation*}
Substitution into \eqref{eq:full-interface-polynomial-SDE} gives
    \begin{equation}
        \label{eq:highest-specialization}
        \begin{aligned}
            \frac{dE_M}{E_M} ={}& 2\sqrt{\kappa}\sum_{i=1}^N \left(\sum_{\alpha\in\mathcal I_R}\frac{\alpha_i}{X_\alpha}\right)dB_i +\bigg[\sum_{\alpha\in\mathcal I_R} \frac{\kappa_\alpha+2a_\alpha}{X_\alpha^2} +2\sum_{\substack{\alpha,\gamma\in\mathcal I_R\\
            \alpha\neq\gamma}}\frac{\kappa\sum_{i=1}^N\alpha_i\gamma_i+a_\gamma}{X_\alpha X_\gamma}\bigg]dt\\
            &-8\sum_{\beta\in R_+}\frac{1}{|\beta|^2}\sum_{\substack{\alpha\in\mathcal I_R\\
            s_\beta(\alpha)\neq\alpha}}\frac{\langle\alpha,\beta\rangle^2}{X_\alpha X_{s_\beta(\alpha)}}\,dt.
        \end{aligned}
    \end{equation}
The sums over distinct interface vectors are ordered. In the following, we suppress the time arguments and write $g=g_t(x)$ and $\lambda_i=\lambda_i(t)$.

\bigskip
For type $A_{N-1}$,
    \begin{equation*}
        X_{e_0-e_i}=g-\lambda_i, \qquad \kappa_{e_0-e_i}=\kappa, \qquad a_{e_0-e_i}=2.
    \end{equation*}
For $i\neq j$, the spatial components of $e_0-e_i$ and $e_0-e_j$ have zero inner product. Hence, the off-diagonal covariance contribution in \eqref{eq:highest-specialization} vanishes. The off-diagonal Loewner contribution is
    \begin{equation*}
        4\sum_{i\neq j} \frac{1}{(g-\lambda_i)(g-\lambda_j)} = 8\sum_{i<j}\frac{1}{(g-\lambda_i)(g-\lambda_j)}.
    \end{equation*}
For $\beta=e_j-e_i$, the reflection exchanges $e_0-e_i$ and $e_0-e_j$ and fixes the remaining interface vectors. Since $|\beta|^2=2$ and the two nonzero inner products have square $1$, the contribution of this root is
    \begin{equation*}
        -\frac{8}{2} \left(\frac{1}{(g-\lambda_i)(g-\lambda_j)}+\frac{1}{(g-\lambda_j)(g-\lambda_i)}\right) = -\frac{8}{(g-\lambda_i)(g-\lambda_j)}.
    \end{equation*}
Summing over $i<j$ cancels the off-diagonal Loewner contribution. Consequently,
    \begin{equation}
        \label{eq:type-A-highest-from-general}
        dE_M = -2\sqrt{\kappa}E_M \sum_{i=1}^N\frac{dB_i}{g-\lambda_i} +(\kappa+4)E_M \sum_{i=1}^N\frac{dt}{(g-\lambda_i)^2}.
    \end{equation}
We next evaluate \eqref{eq:highest-specialization} for types $B_N$ and $D_N$. Here, we have \begin{align*}
    \kappa_{e_0\pm e_i}=\kappa, \qquad \kappa_{e_0}=0.
\end{align*}
Put
    \begin{equation*}
        h_i:=g^2-\lambda_i^2.
    \end{equation*}
The relevant identities are
    \begin{align*}
         \frac{1}{g-\lambda_i}+\frac{1}{g+\lambda_i} &=\frac{2g}{h_i},\\
        -\frac{1}{g-\lambda_i}+\frac{1}{g+\lambda_i} &=-\frac{2\lambda_i}{h_i},\\
        \frac{1}{(g-\lambda_i)^2} +\frac{1}{(g+\lambda_i)^2} &=\frac{2(g^2+\lambda_i^2)}{h_i^2}.
    \end{align*}
The martingale term in both cases is therefore
    \begin{equation*}
        -4\sqrt{\kappa}E_M \sum_{i=1}^N\frac{\lambda_i}{h_i}\,dB_i.
    \end{equation*}

Now, we focus on the drift terms in the first line in \eqref{eq:highest-specialization}. For distinct interface vectors, the only nonzero spatial inner products occur between $e_0-e_i$ and $e_0+e_i$, and equal $-1$. The interface $e_0$ in type $B_N$ has zero spatial components. Thus, the diagonal and off-diagonal diffusion contributions to the drift of $dE_M/E_M$ add up to
    \begin{equation*}
        \kappa\sum_{i=1}^N \left(\frac{1}{(g-\lambda_i)^2}+\frac{1}{(g+\lambda_i)^2}\right)-4\kappa\sum_{i=1}^N\frac{1}{h_i}
        =4\kappa\sum_{i=1}^N\frac{\lambda_i^2}{h_i^2}
            -2\kappa\sum_{i=1}^N\frac{1}{h_i}.
\end{equation*}

The Loewner terms in \eqref{eq:highest-specialization} can be combined as
    \begin{equation*}
        2\sum_{\alpha\in\mathcal I_R}\frac{a_\alpha}{X_\alpha^2}
    +2\sum_{\substack{\alpha,\gamma\in\mathcal I_R\\
    \alpha\neq\gamma}}\frac{a_\gamma}{X_\alpha X_\gamma} = 2\left(\sum_{\alpha\in\mathcal I_R}\frac{1}{X_\alpha}\right)\left(\sum_{\gamma\in\mathcal I_R}\frac{a_\gamma}{X_\gamma}\right).
    \end{equation*}
In type $D_N$, plugging in $a_{e_0\pm e_i}=2$ this equals
    \begin{equation*}
        2\left(2g\sum_{i=1}^N\frac{1}{h_i}\right) \left(4g\sum_{i=1}^N\frac{1}{h_i}\right) = 16g^2\left(\sum_{i=1}^N\frac{1}{h_i}\right)^2.
    \end{equation*}
In type $B_N$, using additionally the Loewner weight $a_{e_0}=\kappa/2$ it equals
    \begin{equation*}
        2\left(\frac{1}{g}+2g\sum_{i=1}^N\frac{1}{h_i}\right)\left(\frac{\kappa}{2g}+4g\sum_{i=1}^N\frac{1}{h_i}\right)
        = \frac{\kappa}{g^2}+(8+2\kappa)\sum_{i=1}^N\frac{1}{h_i}+16g^2\left(\sum_{i=1}^N\frac{1}{h_i}\right)^2.
    \end{equation*}
   Next, we analyze the last term in \eqref{eq:highest-specialization} 
For the long-root contribution, fix $i<j$. The reflection associated with $e_j-e_i$ exchanges $e_0-e_i$ with $e_0-e_j$ and $e_0+e_i$ with $e_0+e_j$. The reflection associated with $e_j+e_i$ exchanges $e_0-e_i$ with $e_0+e_j$ and $e_0+e_i$ with $e_0-e_j$. Each exchanged pair is counted twice in the interface sum. Both roots have squared norm $2$, and every nonzero $\langle\alpha,\beta\rangle^2$ equals $1$. Their combined contribution is therefore
    \begin{equation*}
        -8\bigg[\frac{1}{(g-\lambda_i)(g-\lambda_j)}+\frac{1}{(g+\lambda_i)(g+\lambda_j)}+\frac{1}{(g-\lambda_i)(g+\lambda_j)}+\frac{1}{(g+\lambda_i)(g-\lambda_j)}\bigg] = -\frac{32g^2}{h_i h_j}.
    \end{equation*}
Thus, the total long-root contribution is
    \begin{equation*}
        -32g^2\sum_{i<j}\frac{1}{h_i h_j}.
    \end{equation*}
Together with the quadratic Loewner term, this gives
    \begin{equation*}
        16g^2\left(\sum_{i=1}^N\frac{1}{h_i}\right)^2-32g^2\sum_{i<j}\frac{1}{h_i h_j} = 16g^2\sum_{i=1}^N\frac{1}{h_i^2}.
    \end{equation*}
    In type $B_N$, there is also the short root $\beta=e_i$. Its reflection exchanges $e_0-e_i$ and $e_0+e_i$, while fixing $e_0$ and the other interface vectors. Since $|\beta|^2=1$, its contribution is
    \begin{equation*}
        -8\left(\frac{1}{(g-\lambda_i)(g+\lambda_i)}+\frac{1}{(g+\lambda_i)(g-\lambda_i)}\right) =-\frac{16}{h_i}.
    \end{equation*}
Summing these terms gives $-16\sum_i h_i^{-1}$. Finally, using
    \begin{equation*}
        \frac{g^2}{h_i^2}=\frac{1}{h_i}+\frac{\lambda_i^2}{h_i^2},
    \end{equation*}
we obtain in type $B_N$
    \begin{equation}
        \label{eq:type-B-highest-from-general}
        dE_M=-4\sqrt{\kappa}E_M\sum_{i=1}^N\frac{\lambda_i}{h_i}\,dB_i+4(\kappa+4)E_M\sum_{i=1}^N\frac{\lambda_i^2}{h_i^2}\,dt
        +E_M\left(\frac{\kappa}{g^2}+8\sum_{i=1}^N\frac{1}{h_i}\right)dt,
    \end{equation}
and in type $D_N$
    \begin{equation}
        \label{eq:type-D-highest-from-general}
        dE_M = -4\sqrt{\kappa}E_M\sum_{i=1}^N\frac{\lambda_i}{h_i}\,dB_i+4(\kappa+4)E_M\sum_{i=1}^N\frac{\lambda_i^2}{h_i^2}\,dt+2(8-\kappa)E_M\sum_{i=1}^N\frac{1}{h_i}\,dt.
    \end{equation}
The integrating factors in \eqref{eq:classical-integrating-factor}
satisfy 
    \begin{align*}
        \frac{dI_B}{I_B} &\stackrel{\eqref{eq:standard-B-loewner}}{=}-\left(\frac{\kappa}{g^2}+8\sum_{i=1}^N\frac{1}{h_i}\right)dt,\\
        \frac{dI_D}{I_D} &=2(\kappa-8)\sum_{i=1}^N\frac{1}{h_i}\,dt.
    \end{align*}
Since these factors have finite variation, the product rule shows that the additional drift terms cancel in $Y_t:=I_R(t,x)E_M(t,x)$. In all three cases, the drift of $Y$ is $(1+4/\kappa)\,ds(t,x)$, and $d[Y]_t=4Y_t\,ds(t,x)$. The time density is strictly positive. For type $A_{N-1}$, this follows from positivity of the inverse squared gaps. For types $B_N$ and $D_N$,
    \begin{equation*}
        \sum_{i=1}^N \frac{\lambda_i(t)^2}{(g_t(x)^2-\lambda_i(t)^2)^2}>0,
    \end{equation*}
because the particle vector is nonzero in the chamber interior. Together with $Y_t>0$, this proves that $s(\cdot,x)$ is continuous and strictly increasing. Let $t(u)$ be the inverse time and put $Z_u=Y_{t(u)}$. The time-change theorem gives
    \begin{equation*}
        dZ_u = 2\sqrt{Z_u}\,d\widetilde W_u+\left(1+\frac{4}{\kappa}\right)du
    \end{equation*}
up to the terminal time value, for a one-dimensional Brownian motion $\widetilde W$. Thus $Z$ is a squared Bessel process of the stated dimension, started from $Y_0=E_M(0,x)$. If the terminal time value is finite, $Z$ can be extended beyond it as a squared Bessel process, on an enlargement of the probability space if necessary. This proves the representation.
\end{proof}

The squared-Bessel representation is expressed in terms of the intrinsic time $s(t,x)$. To obtain conclusions in the original Loewner time, we must show that this time diverges on trajectories with infinite lifetime.

\begin{lemma}
    \label{lem:classical-time-exhaustion}
    Assume $0<\kappa\leq8$ and $E_M(0,x)>0$. Then,
    \begin{equation*}
        %\label{eq:classical-time-exhaustion}
        \lim_{t\to\infty}s(t,x)=\infty
    \end{equation*}
    almost surely on $\{\zeta_x=\infty\}.$
\end{lemma}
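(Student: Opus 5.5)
The strategy is to argue by contradiction on the event $\{\zeta_x=\infty\}\cap\{s_\infty<\infty\}$, where $s_\infty:=\lim_{t\to\infty}s(t,x)$. Proposition~\ref{thm:classical-highest-BESQ} will give convergence of $Y_t:=I_R(t,x)E_M(t,x)$, and an explicit growth bound on the interface coordinates will then force the intrinsic time to diverge. The hypothesis $0<\kappa\leq 8$ enters through Proposition~\ref{prop:classical-particle-transition}: it gives $\tau_{\rm coll}=\infty$ almost surely, so $\{\zeta_x=\infty\}$ coincides up to a null set with $\{\tau_x=\infty\}$, and the particle system is defined for all times.

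\emph{Step 1 (convergence of $Y$).} On $\{s_\infty<\infty\}$, the representation \eqref{eq:classical-highest-BESQ-representation} gives $Y_t=Z_{s(t,x)}$. Since $Z$ is a continuous squared Bessel process on $[0,s_\infty]$, $Y_t$ converges to the finite limit $Z_{s_\infty}\geq 0$ as $t\to\infty$, and $\sup_t Y_t<\infty$.

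\emph{Step 2 (reduction to a deterministic-type integral).} The density of $s$ is $\kappa\,Y_t\,\Phi_R(t)$, where
\begin{equation*}
\Phi_A(t)=\sum_{i=1}^N\frac{1}{(g-\lambda_i)^2},\qquad \Phi_{B,D}(t)=4\sum_{i=1}^N\frac{\lambda_i^2}{h_i^2}.
\end{equation*}
I will use the It\^o decomposition from the proof of Proposition~\ref{thm:classical-highest-BESQ}: $\log Y$ has martingale part with bracket $4\kappa\int_0^t\Phi_R\,du$, and drift $\kappa(\delta-2)\Phi_R\,dt$ with $\delta=1+4/\kappa$.
\begin{itemize}[leftmargin=1cm]
\item If $Z_{s_\infty}>0$, then $\log Y_t$ converges. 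I then plan to show that $\int_0^\infty\Phi_R\,du<\infty$: the drift and bracket of $\log Y$ are both multiples of $\int_0^t\Phi_R\,du$, and a continuous local martingale converges exactly where its bracket is finite. Then $\int_0^\infty\Phi_R\,du<\infty$ follows, because if the bracket were infinite, the Dambis--Dubins--Schwarz theorem would make the martingale part oscillate while the drift is monotone, so $\log Y$ could not converge (when $\delta=2$ the drift vanishes and this is immediate; otherwise the drift is comparable to the bracket and the law of the iterated logarithm gives the contradiction).
\item If $Z_{s_\infty}=0$, then $\liminf_{t\to\infty} E_M=0$ for the bounded factor $I_R^{-1}$. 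I will treat this case through the same integral, combined with the lower bound in Step 3.
\end{itemize}
So in the main case the task reduces to showing $\int_0^\infty\Phi_R(u)\,du=\infty$ almost surely on $\{\zeta_x=\infty\}$.

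\emph{Step 3 (growth bound; the main obstacle).} The aim is $\Phi_R(t)\geq c/(1+t)^{1+\varepsilon}$ for some $\varepsilon<0$, or at least along a set of times of infinite $dt/t$-measure. For type $A_{N-1}$, I first use $\Phi_A\geq (g-\lambda_k)^{-2}$ for any single $k$, and then need a sublinear-in-$\sqrt{t}$ bound on some gap.

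The quantity $|\lambda(t)|^2$ has constant drift $\kappa N+4N(N-1)$ plus a martingale, so it is a time-changed squared Bessel process. Hence $|\lambda(t)|^2\leq t\log t$ eventually, by the law of the iterated logarithm.

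For $g_t(x)$, I will try to control $\bigl(g-\bar\lambda\bigr)^2$ with $\bar\lambda$ the empirical mean, via the Loewner equation. Its drift contains terms $4(g-\bar\lambda)\sum_i(g-\lambda_i)^{-1}$, which are bounded by $4N\bigl(1+|\lambda-\bar\lambda|/\min_i|g-\lambda_i|\bigr)$. This is exactly where a lower bound on the gaps is needed, and that is circular. I will break the circularity by splitting time into the periods where $\min_i|g-\lambda_i|\geq\sqrt{t}/\log t$ and their complement. On the complement, $\Phi_A\geq\log^2 t/t$ directly. On the first set, the drift of $(g-\bar\lambda)^2$ is $O(\log t)$, so the gap is $O(\sqrt{t}\log t)$, and then $\Phi_A\gtrsim 1/(t\log^2 t)$.

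Both lower bounds are not integrable near infinity only borderline, so this is the step I expect to fail or to need refinement, for example by using a scale $\sqrt{t}$ exactly and the Brownian scaling of the whole system. Types $B_N$ and $D_N$ follow the same scheme with $g^2$ and the squared flow \eqref{eq:type-B-squared-Loewner-flow}, \eqref{eq:type-D-g-flow}. The additional point there is that $\lambda_N^2$ dominates and $\lambda_N^2/h_N^2$ is bounded below by $1/\lambda_N^2$-type quantities.
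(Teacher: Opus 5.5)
There is a genuine gap, and it lies in the case you postpone. On $\{\zeta_x=\infty,\ s_\infty<\infty\}$ one has $Z_{s_\infty}=0$ if and only if $s_\infty=T_0(Z)$. This can happen only when $\delta=1+4/\kappa<2$, i.e.\ $4<\kappa\leq8$, which is the only range where the lemma is used (to get $\tau_x<\infty$ in Theorem~\ref{thm:classical-interface-transition}). In that case $Y_t\to0$. Since $\partial_t s=\kappa Y_t\Phi_R(t)$, divergence of $\int_0^\infty\Phi_R\,dt$ gives no contradiction: $\Phi_R$ is large exactly because $g_t(x)$ creeps toward a driver, and the product can still be integrable. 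So the reduction to $\int_0^\infty\Phi_R=\infty$, and any lower bound on $\Phi_R$ alone, cannot settle this case. (Also, $I_R^{-1}$ is not a priori bounded in types $B_N$, $D_N$.)

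The missing idea is to bound the time density below by a \emph{sublinear} power of $Y$, and then use that negative powers of $Z$ are integrable up to its zero. In type $A_{N-1}$, Maclaurin's inequality gives $\partial_t s=\kappa E_{N-1}\geq\kappa N\,Y_t^{p}$ with $p=(N-1)/N<1$. Hence $t\leq(\kappa N)^{-1}\int_0^{s(t,x)}Z_u^{-p}\,du$. Moreover $\int_0^{S}Z_u^{-p}\,du<\infty$: trivially when $S<T_0(Z)$, and, when $S=T_0(Z)$ with $\delta<2$, from the drift of $Z^r$ for $0<r<\min\{1-p,1-\delta/2\}$. This handles both of your cases at once. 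In particular, when $Z_{s_\infty}>0$ in type $A_{N-1}$, Step~3 is not needed, since $\Phi_A=E_{N-1}/E_N\geq NE_N^{-1/N}$ is bounded below. The DDS/LIL detour is also unnecessary there: $\inf_tY_t>0$ directly gives $\int\Phi_R<\infty$.

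Step~3 itself also does not close. On your ``good'' set you get $\Phi_A\gtrsim1/(t\log^2t)$, which is integrable at infinity, so no divergence follows. The inference ``the drift of $(g-\bar\lambda)^2$ is $O(\log t)$ there, hence the gap is $O(\sqrt t\log t)$'' is also unjustified: a drift bound valid only on a subset of times says nothing about what accumulates on the complement. In types $B_N$ and $D_N$ the borderline is genuine (heuristically $\Phi\asymp1/t$). ``The same scheme'' also does not address $\lambda_1$ approaching or crossing $0$ in type $D_N$.

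The paper handles these types as follows.
\begin{itemize}
\item It uses AM--GM, $\sum_i\lambda_i^2/h_i^2\geq N(L/\prod_ih_i^2)^{1/N}$ with $L=\prod_i\lambda_i^2$. This gives $\partial_ts\geq4\kappa NY_t^p(x^2L)^{1/N}$ in type $B_N$, resp.\ $4\kappa NY_t^p(I_DL)^{1/N}$ in type $D_N$.
\item Bounding $g^2\leq C(1+Q)$ via $\sup_tY_t<\infty$, this yields $\int_{t_1}^\infty L^{1/N}Q^{q/(2N)}\,dt<\infty$.
\item This is then contradicted by three ingredients: $\int_{t_1}^\infty dt/Q=\infty$, since $Q/\kappa$ is a squared Bessel process of dimension $\delta_Q>2$; H\"older's inequality; and the negative-moment bound $\mathbb E[L^{-\gamma}Q^{-\beta}]\leq Ct^{-c_\theta}$ with $c_\theta>1$, obtained from the radial Dunkl heat kernel.
\end{itemize}
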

\begin{proof}
    By Proposition~\ref{prop:classical-particle-transition}, $\tau_{\rm coll}=\infty$ almost surely. Work on $\{\zeta_x=\infty\}$ and write
        \begin{equation}
            \label{eq:def:Z}
            Y_t:=I_R(t,x)E_M(t,x)=Z_{s(t,x)}, \qquad p:=\frac{N-1}{N}<1.
        \end{equation}
    Suppose, by contradiction, that
        \begin{equation}
            \label{eq:finite-time-assumption}
            S:=\lim_{t\to\infty}s(t,x)<\infty
        \end{equation}
    on an event of positive probability. We first obtain an integrability consequence of this assumption from the squared-Bessel representation. For type $A_{N-1}$, Maclaurin's inequality then gives an immediate contradiction. For types $B_N$ and $D_N$, we derive a finite weighted integral of the particle coordinates and contradict it using the radial process and a negative-moment estimate.

    Let $T_0(Z):=\inf\{u>0:Z_u=0\}$. Since $Y_t>0$ for every finite $t$ and the time has range $[0,S)$, we have $Z_u>0$ for $u<S$, and hence $S\leq T_0(Z)$. We claim that
        \begin{equation}
            \label{eq:negative-power-integrability}
            \int_0^S Z_u^{-p}\,du<\infty \qquad\text{almost surely}.
        \end{equation}
    If $S<T_0(Z)$, this follows from continuity and positivity on $[0,S]$. If $S=T_0(Z)<\infty$, the squared-Bessel dimension $\delta=1+4/\kappa$ is smaller than $2$. Choose
        \begin{equation*}
            0<r<\min\{1-p,1-\delta/2\}.
        \end{equation*}
    Before $T_0(Z)$, It\^o's formula gives
        \begin{equation*}
            dZ_u^r = 2rZ_u^{r-1/2}\,d\widetilde W_u-r(2-\delta-2r)Z_u^{r-1}\,du.
        \end{equation*}
    For $0<\varepsilon<Z_0<K$, stop at the first exit $\sigma_{\varepsilon,K}$ from $(\varepsilon,K)$ and at a deterministic time $m$. Taking expectations yields
        \begin{equation*}
             r(2-\delta-2r)\mathbb E\left[\int_0^{m\wedge\sigma_{\varepsilon,K}}Z_u^{r-1}\,du\right] \leq Z_0^r.
        \end{equation*}
    Letting $\varepsilon\downarrow0$ and $m\uparrow\infty$ shows that the integral of $Z^{r-1}$ is finite up to $T_0(Z)\wedge T_K$, where $T_K:=\inf\{u>0:Z_u=K\}$. On $\{T_0(Z)<\infty\}$, continuity makes the path bounded on $[0,T_0(Z)]$, so a countable choice of $K$ gives
        \begin{equation*}
            \int_0^{T_0(Z)}Z_u^{r-1}\,du<\infty.
        \end{equation*}
    Since $1-p-r>0$ and $Z_u^{-p}=Z_u^{r-1}Z_u^{1-p-r}$, this proves \eqref{eq:negative-power-integrability}.

    \medskip

    In type $A_{N-1}$, $I_R=1$ and $M=N$. Maclaurin's inequality gives
        \begin{equation*}
            E_{N-1}(t,x)\geq N E_N(t,x)^{(N-1)/N} \stackrel{\eqref{eq:def:Z}}{=} NY_t^p=NZ_{s(t,x)}^p.
        \end{equation*}
    Since $ds(t,x)=\kappa E_{N-1}(t,x)\,dt$, see \eqref{eq:unified-highest-time-change}, we obtain
        \begin{equation*}
            t\leq\frac1{\kappa N} \int_0^tY_u^{-p}\,ds(u,x) = \frac1{\kappa N}\int_0^{s(t,x)}Z_v^{-p}\,dv.
        \end{equation*}
    Letting $t\to\infty$ contradicts \eqref{eq:negative-power-integrability}. 
    \medskip
    
    For types $B_N$ and $D_N$, set
        \begin{equation*}
            L(t):=\prod_{i=1}^N\lambda_i(t)^2, \qquad Q(t):=\sum_{i=1}^N\lambda_i(t)^2.
        \end{equation*}
    The arithmetic-geometric mean inequality and \eqref{eq:highest-polynomial-three-classical-cases} give
        \begin{equation*}
            \sum_{i=1}^N\frac{\lambda_i(t)^2}{(g_t(x)^2-\lambda_i(t)^2)^2} \geq
                \begin{cases}
                    \displaystyle
                    N\left(\frac{g_t(x)^2L(t)}{E_M(t,x)}\right)^{1/N}, &R=B_N,\\[3mm]
                    \displaystyle N\left(\frac{L(t)}{E_M(t,x)}\right)^{1/N}, &R=D_N.
                \end{cases}
        \end{equation*}
    Recall the time change \eqref{eq:unified-highest-time-change}, and using \eqref{eq:def:Z} as well as $I_B(t,x)g_t(x)^2=x^2$, see \eqref{eq:classical-integrating-factor}, we obtain
        \begin{equation*}
             \partial_t s(t,x) \geq 
             \begin{cases}
                 4\kappa N Y_t^p \big(x^2L(t)\big)^{1/N}, & R=B_N,  \\
                 4\kappa N Y_t^p \bigl(I_D(t,x)L(t)\bigr)^{1/N},  & R=D_N. 
             \end{cases}
        \end{equation*}
Integrating against $Y_t^{-p}$ and changing variables to the intrinsic time gives
        \begin{equation}
            \label{eq:finite-weighted-integral}
            |x|^{2/N}\int_0^\infty L(t)^{1/N}\, dt\leq\ \frac{1}{4\kappa N}\int_0^S Z_u^{-p}\,du <\infty.
        \end{equation}
    for $B_N$ and similarly in the $D_N$ case it leads to
        \begin{equation}
             \label{eq:finite-weighted-integral-2}
             \int_0^\infty\bigl(I_D(t,x)L(t)\bigr)^{1/N}\,dt\leq\ \frac{1}{4\kappa N}\int_0^S Z_u^{-p}\,du <\infty.
        \end{equation}
     We next express this consequence in terms of the particle process alone. Put
        \begin{equation*}
            q:=
                \begin{cases}
                        0,&R=B_N,\\
                    (\kappa-8)/2,&R=D_N.
                \end{cases}
        \end{equation*}
    Thus $-4<q\leq0$. We shall show that, for a finite random time $t_1\geq1$,
        \begin{equation}
            \label{eq:remaining-target-integral-finite}
            \int_{t_1}^{\infty} L(t)^{1/N}Q(t)^{q/(2N)}\,dt<\infty.
        \end{equation}
    In type $B_N$, this follows directly from \eqref{eq:finite-weighted-integral}, because $x\neq0$ and $q=0$. For type $D_N$, first suppose that $x\neq0$. Then
        \begin{equation*}
            I_D(t,x)=|x|^{-q}|g_t(x)|^q, \qquad Y_t=|x|^{-q}|g_t(x)|^q \prod_{i=1}^N \bigl(g_t(x)^2-\lambda_i(t)^2\bigr)^2.
        \end{equation*}
    Assumption \eqref{eq:finite-time-assumption} and continuity of $Z$ imply $\sup_{t\geq0}Y_t<\infty$. Further, to get lower bounds for $Y_t$, we consider two cases for $g_t(x)^2$. Whenever $g_t(x)^2>2Q(t)$,
        \begin{equation*}
            g_t(x)^2-\lambda_i(t)^2\geq\frac12 g_t(x)^2, \qquad i=1,\ldots,N,
        \end{equation*}
    and therefore
        \begin{equation*}
            Y_t\geq 2^{-2N}|x|^{-q}|g_t(x)|^{4N+q}.
        \end{equation*}
    Since $4N+q>0$, this bounds $g_t(x)^2$ on that region.  
    On its complement, $g_t(x)^2\leq2Q(t)$. Consequently, there is a finite random constant $C>0$ such that
        \begin{equation*}
            g_t(x)^2\leq C(1+Q(t)), \qquad t\geq0.
        \end{equation*}
    Because $q\leq0$, for $x\neq 0$ it follows that
        \begin{align*}
            I_D(t,x)^{1/N} &=|x|^{-q/N}\bigl(g_t(x)^2\bigr)^{q/(2N)}\\
                &\geq |x|^{-q/N}C^{q/(2N)}(1+Q(t))^{q/(2N)}.
        \end{align*}
    If $x=0$, then $g_t(0)=0$ and
        \begin{equation*}
            I_D(t,0) =\exp\left\{2(8-\kappa)\int_0^t \sum_{i=1}^N\frac{du}{\lambda_i(u)^2}\right\} \geq1.
        \end{equation*}
    Since $(1+Q(t))^{q/(2N)}\leq1$, the same lower bound holds, with a suitable positive constant. Thus, in both cases
        \begin{equation*}
            I_D(t,x)^{1/N} \geq c(1+Q(t))^{q/(2N)}
        \end{equation*}
    for some finite random constant $c>0$.
    
    In both types, It\^o's formula gives
        \begin{equation*}
            %\label{eq:remaining-radial-BESQ-SDE}
            dQ(t)=2\sqrt{\kappa Q(t)}\,dW^Q(t) +\kappa\delta_Q\,dt, \qquad \delta_Q=
                \begin{cases}
                    \displaystyle N+\frac{8N^2}{\kappa}, &R=B_N,\\[2mm]
                    \displaystyle N+\frac{8N(N-1)}{\kappa}, &R=D_N.
                \end{cases}
        \end{equation*}
    Here, each pair of long-root interactions contributes $16$ to the drift of $Q$, while the short roots in type $B_N$ contribute an additional $8N$. Thus $Q/\kappa$ is a squared Bessel process of dimension $\delta_Q>2$, and $Q(t)\to\infty$ almost surely. Choose a finite random $t_1\geq1$ such that $Q(t)\geq1$ for $t\geq t_1$. Since $1+Q(t)\leq2Q(t)$ on this interval, the preceding lower bound and \eqref{eq:finite-weighted-integral-2} prove \eqref{eq:remaining-target-integral-finite} also in type $D_N$. Furthermore,
        \begin{equation}
            \label{eq:remaining-inverse-Q-divergence}
            \int_{t_1}^\infty\frac{dt}{Q(t)}=\infty \qquad\text{almost surely}.
        \end{equation}
    Indeed,
        \begin{equation*}
            d\log Q(t) = \frac{2\sqrt{\kappa}}{\sqrt{Q(t)}}\,dW^Q(t) + \frac{\kappa(\delta_Q-2)}{Q(t)}\,dt.
        \end{equation*}
    If the integral in \eqref{eq:remaining-inverse-Q-divergence} were finite, the local martingale would have finite total quadratic variation and hence a finite limit. The finite-variation term would also converge. This would force $\log Q(t)$ to have a finite limit, contradicting $Q(t)\to\infty$.

    \medskip

    Choose $\theta\in(0,1)$ sufficiently small and set
        \begin{equation*}
            \gamma:=\frac{\theta}{N(1-\theta)}, \qquad \beta:=\frac{2+q\theta/N}{2-2\theta}, \qquad c_\theta:=N\gamma+\beta.
        \end{equation*}
    Since $N\geq2$ and $-4<q\leq0$,
        \begin{equation*}
            c_\theta = \frac{1+(1+q/(2N))\theta}{1-\theta}>1.
        \end{equation*}
    Moreover, $\gamma\to0$ and $c_\theta\to1$ as $\theta\downarrow0$. We may therefore require
        \begin{equation}
            \label{eq:remaining-holder-conditions}
            2\gamma<1, \qquad 2c_\theta<\delta_Q.
        \end{equation}
    We next establish
        \begin{equation}
            \label{eq:remaining-negative-moment-estimate}
            \mathbb E\left[L(t)^{-\gamma}Q(t)^{-\beta}\right] \leq Ct^{-c_\theta}, \qquad t\geq1,
        \end{equation}
    for a fixed initial particle configuration. Define
        \begin{equation*}
            H(y):= \left(\prod_{i=1}^N y_i^2\right)^{-\gamma} \left(\sum_{i=1}^N y_i^2\right)^{-\beta}.
        \end{equation*}
    This function is homogeneous of degree $-2c_\theta$. Self-similarity gives
        \begin{equation*}
            \mathbb E_{\lambda(0)}[H(\lambda(t))] = t^{-c_\theta} \mathbb E_{\lambda(0)/\sqrt t}[H(\lambda(1))].
        \end{equation*}
    For starting points $z$ in a compact subset of the closed chamber, the transition-density formula for the radial Dunkl process and the exponential bound for the Dunkl kernel imply
        \begin{equation*}
            p_1(z,y) \leq C_1e^{-c_1|y|^2} \prod_{\alpha\in R_+} |\langle\alpha,y\rangle|^{8/\kappa}, \qquad y\in C_R;
        \end{equation*}
    see \cite[Section~3]{Chybiryakov2008}. The constants may depend on the compact set, $R$, and $\kappa$. The root weight is bounded on the unit sphere. The remaining angular singularities are bounded by a constant times $\prod_i|y_i|^{-2\gamma}$, which is integrable on the sphere because $2\gamma<1$. In polar coordinates, the radial power near the origin is $r^{\delta_Q-1-2c_\theta}$, which is integrable by \eqref{eq:remaining-holder-conditions}. The Gaussian factor gives integrability at infinity. Thus. $\mathbb E_z[H(\lambda(1))]$ is uniformly bounded for $z=\lambda(0)/\sqrt t$, $t\geq1$, proving \eqref{eq:remaining-negative-moment-estimate}.

    Since $c_\theta>1$, Tonelli's theorem gives
        \begin{equation}
            \label{eq:remaining-complementary-integral-finite}
            \int_1^\infty L(t)^{-\gamma}Q(t)^{-\beta}\,dt<\infty \qquad\text{almost surely}.
        \end{equation}
    In type $D_N$, a coordinate may cross zero. However, each coordinate has quadratic variation $\kappa\,dt$, so the occupation-time formula gives the Lebesgue measure
        \begin{equation*}
            \operatorname{Leb}\{t\geq0:L(t)=0\}=0 \qquad\text{almost surely}.
        \end{equation*}
    In type $B_N$, all coordinates remain strictly positive. Consequently, in both cases, for almost every $t$,
    
        \begin{equation*}
            \frac{1}{Q(t)} = \bigl(L(t)^{1/N}Q(t)^{q/(2N)}\bigr)^\theta \bigl(L(t)^{-\gamma}Q(t)^{-\beta}\bigr)^{1-\theta}.
        \end{equation*}
    H\"older's inequality therefore yields
        \begin{equation*}
            \int_{t_1}^T\frac{dt}{Q(t)}\leq  \left(\int_{t_1}^T L(t)^{1/N}Q(t)^{q/(2N)}\,dt \right)^\theta \left(\int_{t_1}^T L(t)^{-\gamma}Q(t)^{-\beta}\,dt\right)^{1-\theta}.
        \end{equation*}
    Letting $T\to\infty$, equations \eqref{eq:remaining-target-integral-finite} and \eqref{eq:remaining-complementary-integral-finite} would imply
        \begin{equation*}
            \int_{t_1}^\infty\frac{dt}{Q(t)}<\infty,
        \end{equation*}
    contradicting \eqref{eq:remaining-inverse-Q-divergence}. Thus, the finite-time event has probability zero.
\end{proof}
\begin{proof}[Proof of Theorem~\ref{thm:classical-interface-transition}]
    By Proposition~\ref{prop:classical-particle-transition}, $\tau_{\rm coll}=\infty$ almost surely throughout the range $0<\kappa\leq8$. Let $Z$ be the squared Bessel process from Proposition~\ref{thm:classical-highest-BESQ}, and let $T_0(Z):=\inf\{u>0:Z_u=0\}$. Its dimension is $\delta=1+4/\kappa$. The boundary classification of squared Bessel processes gives
        \begin{equation}
            \label{eq:interface-BESQ-zero-classification}
            \begin{cases}
                T_0(Z)=\infty \quad\text{almost surely}, &0<\kappa\leq4,\\
                T_0(Z)<\infty \quad\text{almost surely}, &4<\kappa\leq8;
            \end{cases}
        \end{equation}
    see \cite[Chapter~XI]{RevuzYor}.

    \medskip

    First suppose $0<\kappa\leq4$, and assume, by contradiction, that $\tau_x<\infty$ on an event of positive probability. The particle paths remain finite and in the chamber interior at this time. At a finite hitting time, the Loewner image has a terminal value at one of the poles. There is no finite-time escape to infinity, since the particles are bounded on compact time intervals and the Loewner vector field is of order $1/g$ for large $|g|$. In type $B_N$, the origin cannot be reached before a moving pole, as shown by \eqref{eq:type-B-squared-Loewner-flow}. All moving poles are nonzero at $\tau_x$, since $\tau_{\rm coll}=\infty$. Hence, $|g_t(x)|$ has a finite, strictly positive limit as $t\uparrow\tau_x$, and so does $I_B(t,x)$. We next consider type $D_N$. The noncentral initial condition and continuity imply
        \begin{equation}
            \label{eq:type-D-noncentral-gap-positive}
            g_t(x)^2-\lambda_1(t)^2>0, \qquad 0\leq t<\tau_x.
        \end{equation}
    Suppose that the first interface hit occurred at the origin. Then
        \begin{equation*}
            g_t(x)^2\longrightarrow0, \qquad \lambda_1(t)^2\longrightarrow0 \qquad\text{as }t\uparrow\tau_x.
        \end{equation*}
    Every other particle coordinate stays bounded away from zero near $\tau_x$, because the particle configuration remains in the chamber interior. On the other hand,
        \begin{equation}
            \label{eq:type-D-h-flow-interface-proof}
            \partial_tg_t(x)^2 =8\sum_{i=1}^N \frac{g_t(x)^2}{g_t(x)^2-\lambda_i(t)^2}.
        \end{equation}
    By \eqref{eq:type-D-noncentral-gap-positive},
        \begin{equation*}
            \frac{g_t(x)^2}{g_t(x)^2-\lambda_1(t)^2}\geq1,
        \end{equation*}
    whereas all terms with $i\geq2$ tend to zero. Thus, the derivative in \eqref{eq:type-D-h-flow-interface-proof} is bounded below by a positive constant near $\tau_x$. This is incompatible with the positive function $g_t(x)^2$ tending to zero. The first interface hit therefore occurs away from the origin. Consequently,
        \begin{equation*}
            I_D(t,x) = \left|\frac{g_t(x)}{x}\right|^{(\kappa-8)/2}
        \end{equation*}
    also has a finite, strictly positive terminal limit. In all three cases, $E_M(t,x)\to0$ as $t\uparrow\tau_x$, and $I_R(t,x)$ has a finite, strictly positive limit. Recall \eqref{eq:highest-polynomial-three-classical-cases} and \eqref{eq:unified-highest-time-change} the time density can be written as
        \begin{equation*}
            %\label{eq:terminal-time-density-interface-proof}
            \frac{1}{\kappa}\partial_t s(t,x) =
                \begin{cases}
                    E_{N-1}(t,x), &R=A_{N-1},\\[2mm]
                    \displaystyle
                    4I_B(t,x)g_t(x)^2 \sum_{i=1}^N\lambda_i(t)^2 \prod_{j\neq i} \bigl(g_t(x)^2-\lambda_j(t)^2\bigr)^2, &R=B_N,\\[3mm]
                    \displaystyle
                    4I_D(t,x) \sum_{i=1}^N\lambda_i(t)^2 \prod_{j\neq i}\bigl(g_t(x)^2-\lambda_j(t)^2\bigr)^2, &R=D_N.
                \end{cases}
        \end{equation*}
    This expression is bounded near $\tau_x$, so
        \begin{equation*}
            S_x:=\lim_{t\uparrow\tau_x}s(t,x)<\infty.
        \end{equation*}
    Furthermore,
        \begin{equation*}
             I_R(t,x)E_M(t,x)\longrightarrow0.
        \end{equation*}
    The representation \eqref{eq:classical-highest-BESQ-representation} and continuity of $Z$ imply $Z_{S_x}=0$, contradicting \eqref{eq:interface-BESQ-zero-classification}. This proves $\tau_x=\infty$ almost surely for $0<\kappa\leq4$.

    \medskip

    Now, suppose $4<\kappa\leq8$. Then $T_0(Z)<\infty$ almost surely. On the event $\{\tau_x=\infty\}$, we have $\zeta_x=\infty$, and Lemma~\ref{lem:classical-time-exhaustion} gives   
        \begin{equation*}
            \lim_{t\to\infty}s(t,x)=\infty.
        \end{equation*}
    The continuous, strictly increasing time must therefore reach $T_0(Z)$ at a finite time $t_0$. The squared-Bessel representation then gives
        \begin{equation*}
            I_R(t_0,x)E_M(t_0,x)=Z_{T_0(Z)}=0.
        \end{equation*}
    This contradicts positivity of both factors for $t_0<\zeta_x$. Hence, $\tau_x<\infty$ almost surely.
\end{proof}

\begin{proof}[Proof of Proposition~\ref{prop:type-D-central-component}]
    By Proposition~\ref{prop:classical-particle-transition}, $\tau_{\rm coll}=\infty$ almost surely. In the chosen type-$D_N$ chamber, the first particle satisfies
        \begin{equation*}
           % \label{eq:type-D-smallest-coordinate-central-proof}
            d\lambda_1(t) =\sqrt{\kappa}\,dB_1(t) + 8\lambda_1(t) \sum_{j=2}^N \frac1{\lambda_1(t)^2-\lambda_j(t)^2}\,dt.
        \end{equation*}
    Define
        \begin{equation*}
            J(t):=\exp\left\{-8\int_0^t\sum_{j=2}^N \frac1{\lambda_1(u)^2-\lambda_j(u)^2}\,du\right\}.
        \end{equation*}
    The process $J$ is finite on compact time intervals, has finite variation, and satisfies $J(t)\geq1$. The product rule gives
        \begin{equation*}
            d\bigl(J(t)\lambda_1(t)\bigr) = \sqrt{\kappa}J(t)\,dB_1(t),
        \end{equation*}
    and hence
        \begin{equation*}
            [J\lambda_1]_t = \kappa\int_0^tJ(u)^2\,du \geq\kappa t.
        \end{equation*}
    By the Dambis-Dubins-Schwarz theorem, there exists a one-dimensional Brownian motion $\widetilde B$ such that
        \begin{equation*}
            J(t)\lambda_1(t) = \lambda_1(0)+\widetilde B_{[J\lambda_1]_t}.
        \end{equation*}
    Since the quadratic variation is continuous and tends to infinity, this process reaches zero in finite time almost surely. As $J(t)>0$, it follows that $\rho_0:=\inf\{t>0:\lambda_1(t)=0\}<\infty$ almost surely. The central initial condition and continuity imply
        \begin{equation*}
            |g_t(x)|<|\lambda_1(t)|, \qquad 0\leq t<\tau_x\wedge\rho_0.
        \end{equation*}
    If $\rho_0<\tau_x$, continuity at $\rho_0$ would give $g_{\rho_0}(x)=\lambda_1(\rho_0)=0$,  which is itself a moving-interface hit. Therefore, $\tau_x\leq\rho_0<\infty$ almost surely.  For $x=0$, equation \eqref{eq:type-D-g-flow} gives $g_t(0)=0$ for $t<\rho_0$. No moving pole is zero before $\rho_0$, so in this case $\tau_0=\rho_0$. Together with $\tau_{\rm coll}=\infty$, this proves the proposition.
\end{proof}

\section{Study of hitting times when in-between drivers}\label{sec:collisions}
The behavior of $E_M(t,x)$ gives us information on whether hitting occurs when $0<\kappa\leq8$ on $t\in[0,\zeta_x)$, all of which was presented in the previous section.
The advantage of that approach was that $E_M(t,x)$ cancels if and only if a hitting event occurs, but the disadvantage lies in that it does not make a distinction for which driver hits $g_t(x)$.
This is clearly not a problem if, for example, $x>\lambda_N(0)$, namely, $x$ lies to the right of all drivers.
In this case, because drivers' paths are continuous, it is evident that if any driver hits $g_t(x)$, it must be $\lambda_N$, the rightmost driver.

However, it is also of interest to consider which driver hit $g_t(x)$ if $x$ is in between two drivers, for example $\lambda_i(0)<x<\lambda_{i+1}(0)$.
This situation requires a slightly different approach: we clearly require a probe that can distinguish one driver from the other.
The distinction cannot be made by using symmetric polynomials, as they treat all drivers in the same footing by definition.

The strategy we present below is based on defining an asymmetric rational function which, at time $\tau_x$, takes the value zero to indicate the left-neighbor driver, and one to indicate the right-neighbor driver.
We consider the Dyson case with two drivers ($A_{1}$, $N=2$) as the simplest non-trivial example of this situation.
We suspect that this strategy may be applied in broader generality; however, computations are significantly more involved when handling three or more drivers, or in the $B_N$ and $D_N$ cases.
Nevertheless, we expect the results below to be significant because if $x$ starts between two drivers, these two drivers are the only ones that can hit $g_t(x)$ and have the greatest influence on its evolution.
In fact, we can see from \eqref{eq:multipleSLE} that the evolution of $g_t(x)$ near a driver is dominated by the term in the Loewner equation involving that driver, so it could that the insights obtained from this two-driver setting can be generalized to the full generality presented in Section~\ref{sec:classical-phase-transitions}.

Let us begin our exploration here. Set $N=2$ and $\lambda_1(0)<x<\lambda_2(0)$. Our aim is to reduce the problem to a one-dimensional process 
\begin{align*}
    U(t):=\frac{g_t(x)-\lambda_1(t)}{\lambda_2(t)-\lambda_1(t)}
\end{align*}
hitting its boundary. Since the starting point is between the two drivers, we have $U(t)\in(0,1)$ for $t\in[0,\zeta_x)$. In this particular case $\tau_x$ is given by
\begin{align*}
\tau_x=\inf\{t>0:\ g_t(x)=\lambda_1(t)\ \text{or}\ g_t(x)=\lambda_2(t)\},
\end{align*}
and we stress that we are always working with time up to $\zeta_x$. It is then clear that hitting one of the drivers corresponds to $U(t)=0$ or $U(t)=1$. Our first step is to calculate the SDE of $U$.
\begin{lemma} \label{lm:SDE:U}
    Define $Y:=\lambda_2-\lambda_1$. For $\kappa>0$ and $t\in[0,\zeta_x)$, the process $U$ satisfies the SDE 
    \begin{align*}
    dU(t)
    &=\frac{\sqrt{\kappa}}{Y(t)}\sqrt{U(t)^2+\bigl(1-U(t)\bigr)^2}\,dW(t)+\frac{2U(t)-1}{Y(t)^2}\left[\frac{2}{U(t)\bigl(U(t)-1\bigr)}+(\kappa-4)\right]dt
    \end{align*}
    with
    \[dW(t):=-\frac{\bigl(1-U(t)\bigr)\,dB_1(t)+U(t)\,dB_2(t)}{\sqrt{U(t)^2+\bigl(1-U(t)\bigr)^2}}.\]
\end{lemma}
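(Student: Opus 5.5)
This is a direct Itô computation for the ratio $U=A/Y$, where $A:=g_t(x)-\lambda_1(t)$ and $Y:=\lambda_2(t)-\lambda_1(t)$. Both are positive on $[0,\zeta_x)$. The plan is to write their SDEs, apply Itô's formula to $f(a,y)=a/y$, and reorganize the drift into the stated form.

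\textbf{Step 1: the SDEs for $A$ and $Y$.} With $N=2$, $\sigma_i=\sqrt\kappa$ and repulsion $4/(\lambda_i-\lambda_j)$ as in \eqref{eq:sde:dyson}, and Loewner equation \eqref{eq:multipleSLE}:
\begin{align*}
dY &= \sqrt\kappa\,(dB_2-dB_1)+\frac{8}{Y}\,dt,\\
dA &= -\sqrt\kappa\,dB_1+\Bigl(\frac{2}{A}+\frac{2}{A-Y}+\frac{4}{Y}\Bigr)dt.
\end{align*}
The drift of $A$ uses $g-\lambda_2=A-Y$ and $d\lambda_1=\sqrt\kappa\,dB_1-\frac{4}{Y}dt$. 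The brackets are
\begin{equation*}
d[A]=\kappa\,dt,\qquad d[Y]=2\kappa\,dt,\qquad d[A,Y]=\kappa\,dt.
\end{equation*}

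\textbf{Step 2: Itô for $U=A/Y$.} Itô's formula gives
\begin{equation*}
dU=\frac{dA}{Y}-\frac{A\,dY}{Y^2}+\frac{A\,d[Y]}{Y^3}-\frac{d[A,Y]}{Y^2}.
\end{equation*}
The martingale part is
\begin{equation*}
\frac{\sqrt\kappa}{Y}\bigl(-dB_1-U(dB_2-dB_1)\bigr)=-\frac{\sqrt\kappa}{Y}\bigl((1-U)dB_1+U\,dB_2\bigr).
\end{equation*}
Normalizing by $\sqrt{U^2+(1-U)^2}$ produces exactly the stated $dW$. By Lévy's characterization, $W$ is a Brownian motion, since its bracket is $t$. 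This is legitimate because the normalizer is at least $1/\sqrt2$.

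\textbf{Step 3: collecting the drift.} Divide every term by $Y^{-2}$ and express it through $U$:
\begin{itemize}[leftmargin=2em]
\item from $dA/Y$: $\frac{2}{U}+\frac{2}{U-1}+4$;
\item from $-A\,dY/Y^2$: $-8U$;
\item from the brackets: $2\kappa U-\kappa$.
\end{itemize}
The two Loewner terms combine as
\begin{equation*}
\frac{2}{U}+\frac{2}{U-1}=\frac{2(2U-1)}{U(U-1)},
\end{equation*}
and the remaining terms combine as $4-8U+\kappa(2U-1)=(2U-1)(\kappa-4)$. Together these give
\begin{equation*}
\frac{2U-1}{Y^2}\Bigl[\frac{2}{U(U-1)}+(\kappa-4)\Bigr],
\end{equation*}
which is the claimed drift.

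The only delicate point is bookkeeping: the sign of the repulsion drift on $\lambda_1$, and the Itô correction terms. I will double-check these by confirming that the drift vanishes at $U=1/2$, as the reflection symmetry $U\mapsto 1-U$ demands. All computations are valid for $t<\zeta_x$, where $A$, $Y$ and $Y-A$ are strictly positive.
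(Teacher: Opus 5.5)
Your proposal is correct and follows the paper's proof essentially verbatim: you write the same SDEs for $g_t(x)-\lambda_1$ and $Y$, apply the same It\^o formula for the ratio, and regroup the drift the same way, $4-8U+\kappa(2U-1)=(2U-1)(\kappa-4)$. Your L\'evy-characterization check that $W$ is a Brownian motion is a small, welcome addition, since the paper only defines $dW$; in Step~3 you mean factoring out $Y^{-2}$ rather than dividing by it.
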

\begin{proof}
    As in previous derivations, we omit the time-variable in parentheses when possible. For $t\in[0,\zeta_x)$ we define $X_1$ by
    \begin{align*} 
        X_1:=g_t(x)-\lambda_1,
    \end{align*}
    and realize that $U=X_1/Y$. We also note that
    \begin{align*}
        g_t(x)-\lambda_1=X_1=UY,\qquad 
        g_t(x)-\lambda_2=X_1-Y=\big(U-1\big)Y.
    \end{align*}
    Inserting this into \eqref{eq:sde:dyson} and \eqref{eq:multipleSLE} yields
    \begin{align*}
        d g_t(x)&=\left(\frac{2}{g_t(x)-\lambda_1}+\frac{2}{g_t(x)-\lambda_2}\right) dt=\left(\frac{2}{X_1}+\frac{2}{X_1-Y}\right)dt
\\
        d\lambda_1&=\sqrt{\kappa}\,dB_1+\frac{4}{\lambda_1-\lambda_2}\,dt
        =\sqrt{\kappa}\,dB_1-\frac{4}{Y}\,dt,\\
        d\lambda_2&=\sqrt{\kappa}\,dB_2+\frac{4}{\lambda_2-\lambda_1}\,dt
        =\sqrt{\kappa}\,dB_2+\frac{4}{Y}\,dt.
    \end{align*}
    The SDEs of $X_1$ and $Y$ are given by
    \begin{align*}
        dX_1
        &=\partial_t g_t(x)-d\lambda_1=\left(\frac{2}{X_1}+\frac{2}{X_1-Y}\right)dt-\sqrt{\kappa}\,dB_1+\frac{4}{Y}\,dt,\\
        dY
        &=d\lambda_2-d\lambda_1
        =\sqrt{\kappa}\big(dB_2-dB_1\big)+\frac{8}{Y}\,dt.
    \end{align*}
    The next step is to derive the SDE for $U=X_1/Y$. It\^o's formula and the SDE for $(X_1,Y)$ give the following for $t<\zeta_x$:
    \begin{align*}
        dU&=\frac{1}{Y}\,dX_1-\frac{X_1}{Y^2}\,dY
        +\frac{X_1}{Y^3}\,d[Y,Y]_t-\frac{1}{Y^2}\,d [X_1,Y]_t\\
        &=\frac{1}{Y}\,dX_1-\frac{U}{Y}\,dY
        +\frac{U}{Y^2}\big(2\kappa\,dt\big)-\frac{1}{Y^2}\big(\kappa\,dt\big)\\
        &=\frac{1}{Y}\Bigg[\left(\frac{2}{X_1}+\frac{2}{X_1-Y}\right)dt-\sqrt{\kappa}\,dB_1+\frac{4}{Y}dt\Bigg]
        -\frac{U}{Y}\Bigg[\sqrt{\kappa}(dB_2-dB_1)+\frac{8}{Y}dt\Bigg]+\frac{2\kappa U-\kappa}{Y^2}\,dt.
    \end{align*}
    Now, insert $X_1=UY$ and $X_1-Y=(U-1)Y$ to simplify the first drift term:
    \begin{align*}
        \frac{1}{Y}\left(\frac{2}{X_1}+\frac{2}{X_1-Y}\right)
        =\frac{2}{Y^2}\left(\frac{1}{U}+\frac{1}{U-1}\right).
    \end{align*}
    Collecting the drift terms gives us
    \begin{align*}
        dU
        &=-\frac{\sqrt{\kappa}}{Y}\Big[dB_1+U(dB_2-dB_1)\Big]
        +\frac{1}{Y^2}\left[
        2\left(\frac{1}{U}+\frac{1}{U-1}\right)+(\kappa-4)(2U-1)
        \right]dt.
    \end{align*}
    Next, using $U^{-1}+(U-1)^{-1}=(2U-1){U^{-1}(U-1)^{-1}}$, we obtain
    \begin{align*}
        dU
        &=-\frac{\sqrt{\kappa}}{Y}\Big[(1-U)dB_1+U\,dB_2\Big]
        +\frac{2U-1}{Y^2}\left[
        \frac{2}{U(U-1)}+(\kappa-4)
        \right]dt\\
        &=\frac{\sqrt{\kappa}}{Y}\sqrt{U^2+(1-U)^2}\,dW
        +\frac{2U-1}{Y^2}\left[\frac{2}{U(U-1)}+(\kappa-4)\right]dt
    \end{align*}
    where
    \[dW:=-\frac{(1-U)\,dB_1+U\,dB_2}{\sqrt{U^2+(1-U)^2}}.\]
\end{proof}
\begin{corollary}\label{co:SDEU}
    For $\kappa> 0$, define the increasing time-change 
\begin{align*}
\theta(t):=\int_0^t \frac{\kappa}{ Y^2(u)}du,
\qquad t<\tau_{\text{coll}}=\inf\{u:\ Y_u=0\},
\end{align*}
and write $U(\theta):=U\big(\theta(t)\big)$. The process $U(\theta)$ solves the following SDE on $(0,1)$
\begin{align}
dU(\theta)
&=
\sqrt{U^2(\theta)+(1-U(\theta))^2}\,d W(\theta)
+\frac{2U(\theta)-1}{\kappa}\left[
\frac{2}{U(\theta)(U(\theta)-1)}+(\kappa-4)
\right]d\theta,
\label{eq:Us-timechanged}
\end{align}
for $\theta<\theta(\tau_x)$ and the Brownian motion $W(\theta):=W(\theta(t))$ defined in Lemma~\ref{lm:SDE:U}.
\end{corollary}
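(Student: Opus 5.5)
The argument is a standard random time change applied to the SDE of Lemma~\ref{lm:SDE:U}. Write that SDE as
\[
dU(t)=\frac{\sqrt\kappa}{Y(t)}\,\sigma(U(t))\,dW(t)+\frac{1}{Y(t)^2}\,m(U(t))\,dt,
\]
where
\[
\sigma(u):=\sqrt{u^2+(1-u)^2},\qquad m(u):=(2u-1)\left[\frac{2}{u(u-1)}+\kappa-4\right].
\]

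\textbf{Step 1: the clock.} On $[0,\tau_{\rm coll})$ the gap $Y$ is continuous and strictly positive. Hence $\kappa/Y^2$ is continuous, positive and locally integrable, so $\theta$ is $C^1$, strictly increasing, and has a continuous inverse $t(\theta)$ on $[0,\theta(\tau_{\rm coll}))$. We then set $\widetilde U(\theta):=U(t(\theta))$; this is what the statement denotes $U(\theta)$.

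\textbf{Step 2: $W$ is a Brownian motion.} Before time-changing, check that the process $W$ of Lemma~\ref{lm:SDE:U} is a Brownian motion. Its integrand is a unit vector in $\R^2$, because $(1-U)^2+U^2$ is exactly the normalising factor. Since $B_1,B_2$ are independent, $d[W]_t=dt$, and L\'evy's characterization applies. Note that the statement's ``$W(\theta):=W(\theta(t))$'' is loose notation. The Brownian motion driving the time-changed equation is really
\[
\widetilde W(\theta):=\int_0^{t(\theta)}\frac{\sqrt\kappa}{Y(u)}\,dW(u).
\]
This is a continuous local martingale in the time-changed filtration $\mathcal F_{t(\theta)}$, with quadratic variation $\int_0^{t(\theta)}\kappa Y^{-2}\,du=\theta$. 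By the Dambis--Dubins--Schwarz / L\'evy theorem it is a Brownian motion, enlarging the probability space if $\theta(\tau_{\rm coll})<\infty$, exactly as in Proposition~\ref{thm:classical-highest-BESQ}.

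\textbf{Step 3: transforming the equation.} Apply the time-change formulas:
\begin{itemize}
\item for the stochastic integral, $\int_0^{t(\theta)}H\,dM=\int_0^\theta H(t(v))\,dM(t(v))$;
\item for the Lebesgue integral, $dt=\dfrac{Y^2}{\kappa}\,d\theta$.
\end{itemize}
The diffusion term becomes $\sigma(\widetilde U)\,d\widetilde W(\theta)$. The drift becomes
\[
\frac{1}{Y^2}\,m(\widetilde U)\,\frac{Y^2}{\kappa}\,d\theta=\frac{1}{\kappa}\,m(\widetilde U)\,d\theta .
\]
This is precisely \eqref{eq:Us-timechanged}.

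\textbf{Step 4: range and integrability.} The equation is stated for $\theta<\theta(\tau_x)$, i.e.\ $t<\zeta_x$. On this interval $\widetilde U\in(0,1)$, so the coefficients are locally bounded and the integrals are well defined. This is the only place the restriction to $(0,1)$ enters.

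\textbf{Main obstacle.} The main care point is Step 2: correctly identifying the driving Brownian motion and the filtration after the random time change, since the statement's notation blurs this. The drift computation itself is routine.
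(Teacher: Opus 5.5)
Your time-change computation is correct, and it supplies the details the paper treats as immediate. Under $dt=\frac{Y^2}{\kappa}\,d\theta$ the drift $Y^{-2}m(U)\,dt$ becomes $\kappa^{-1}m(\widetilde U)\,d\theta$. The driving noise is properly $\widetilde W(\theta)=\int_0^{t(\theta)}\frac{\sqrt\kappa}{Y}\,dW$, which is a Brownian motion for $(\mathcal F_{t(\theta)})$ by L\'evy's characterization; the paper's ``$W(\theta(t))$'' is indeed loose notation.

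However, the paper's proof is devoted to a point you sidestep. It shows that the clock exhausts $[0,\infty)$, i.e.\ $\theta(t)\to\infty$ as $t\uparrow\tau_{\rm coll}$ for every $\kappa>0$, so $\theta$ maps $[0,\tau_{\rm coll})$ bijectively onto $[0,\infty)$. You leave open the case $\theta(\tau_{\rm coll})<\infty$ and patch it with an enlargement of the probability space. That is enough for the displayed identity on $\theta<\theta(\tau_x)$, but not for the role the corollary plays later. In Theorem~\ref{th:hittingprobs}(i), $U$ reaches $\{0,1\}$ in finite $\theta$-time. This yields $\tau_x<\tau_{\rm coll}$, and $\tau_x<\infty$ when $4<\kappa\le8$, only because $\theta(\tau_{\rm coll})=\infty$, i.e.\ no $\theta$-time is cut off by the collision. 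The path of an extended $\widetilde U$ beyond $\theta(\tau_{\rm coll})$ on an enlarged space says nothing about the Loewner flow.

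The missing step is short:
\begin{itemize}
\item From $dY=\sqrt{2\kappa}\,d\beta+\frac{8}{Y}\,dt$, the process $R=Y/\sqrt{2\kappa}$ is a Bessel process of dimension $\delta=1+8/\kappa$, and $\theta(t)=\frac12\int_0^tR_u^{-2}\,du$.
\item Since $d\log R=R^{-1}\,d\beta+\frac{\delta-2}{2}R^{-2}\,dt$, suppose $\int_0^{\tau_{\rm coll}}R_u^{-2}\,du<\infty$. Then the local martingale part has finite quadratic variation and converges, and the drift term converges too, so $\log R$ would have a finite limit.
\item This contradicts $R\to0$ at $\tau_{\rm coll}<\infty$ when $\kappa>8$, $R\to\infty$ when $\kappa<8$, and $\liminf R=0$, $\limsup R=\infty$ when $\kappa=8$.
\end{itemize}
With this in hand, no enlargement is needed for the time horizon. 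The only remaining technicality is continuing $W$ after $\zeta_x$, where $U$, and hence $W$, is undefined.
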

\begin{proof}
    The integrand in the time change is positive, so we only need to prove that the time change can go up to infinity, that is, $\theta(t)\to\infty$ as $t\to\tau_{\rm coll}$. $Y(t)=\lambda_2(t)-\lambda_1(t)$ satisfies
\[
dY(t)=\sqrt{2\kappa}\,d\beta(t)+\frac{8}{Y(t)}\,dt
\]
with $\beta(t)$ a standard Brownian motion by Lévy's characterization theorem. Hence, $R(t)=Y(t)/\sqrt{2\kappa}$ is a Bessel process of dimension
\[
\delta=1+\frac{8}{\kappa}.
\]
Moreover,
\[
\theta(t)=\int_0^t\frac{\kappa}{Y^2(u)}\,du
=\frac12\int_0^t\frac{du}{R^2(u)}.
\]
Using Itô's formula,
\[
d\log R(t)=\frac1{R(t)}\,d\beta(t)+\frac{\delta-2}{2R^2(t)}\,dt.
\]
If $\kappa>8$, then $\delta<2$ and $R(t)\to0$ as $t\uparrow\tau_{\text{coll}}$; the logarithmic identity implies $\int_0^{\tau_{\text{coll}}}R_u^{-2}\,du=\infty$, hence $\theta(t)\to\infty$ as $t\uparrow\tau_{\text{coll}}$. If $0<\kappa<8$, or equivalently $\delta>2$, then $\tau_{\text{coll}}=\infty$, and the same Bessel logarithm argument gives $\theta(t)\to\infty$ as $t\to\infty$. Finally, if $\kappa=8$, that is $\delta=2$, the drift term in the SDE of $\log R(t)$ vanishes, making it a local martingale. If
\[
\int_0^\infty \frac{du}{R^2(u)}<\infty,
\]
then the local martingale \(\log R_t\) would have finite total quadratic variation and would converge almost surely. This is incompatible with the long-time behavior of the two-dimensional Bessel process. Hence,
\[
\int_0^\infty\frac{du}{R^2(u)}=\infty
\]
almost surely. Therefore, the time-changed diffusion has the full time interval $[0,\infty)$ available before driver collision. 
\end{proof}
Now, we determine the hitting of $\lambda_1$ or $\lambda_2$ via boundary classification of $U$.  Since $\theta(\cdot)$ is strictly increasing, $U(t)$ hits $\{0,1\}$ in $t$-time if and only if $U(\theta)$ hits $\{0,1\}$ in $\theta$-time. For this purpose, we take a closer look at the scale density and scale function given by
\begin{align*}
\rho(u):=\exp\!\left(-\int\limits_{1/2}^u \frac{2\mu(y)}{\sigma^2(y)}\,dy\right),
\qquad
S(u):=\int\limits_{1/2}^u \rho(v)\,dv, \quad u\in(0,1). 
\end{align*}
and 
\begin{align*}
\sigma^2(u):=u^2+(1-u)^2,
\qquad
\mu(u):=\frac{2u-1}{\kappa}\left[\frac{2}{u(u-1)}+(\kappa-4)\right]
\end{align*}
so that \eqref{eq:Us-timechanged} is $dU(\theta)=\mu(U(\theta))\,d\theta+\sigma(U(\theta))\,d W(\theta)$. 

Before we continue with the next statement, we note that in the case we are considering here, $\tau_x\leq\tau_{\text{coll}}$ almost surely, as the driver trajectories are continuous, and if the drivers collide ($\tau_{\text{coll}}<\infty$, $\kappa>8$), then one of them must hit $g_t(x)$ at time $\tau_{\text{coll}}$ at the latest.
If the drivers do not collide, we have $\tau_{\text{coll}}=\infty$. We will see that, in fact, $\tau_x<\tau_{\text{coll}}$.
\begin{theorem}[Hitting probabilities for the Loewner flow]\label{th:hittingprobs}
Let
\[
u_0:=U(0)=\frac{x-\lambda_1(0)}{\lambda_2(0)-\lambda_1(0)}\in(0,1).
\]
\begin{enumerate}
\item[(i)] If $\kappa>4$, then $\tau_x<\tau_{\text{coll}}$ almost surely with $\tau_\text{coll}=\infty$ for $4<\kappa\leq 8$, $\tau_\text{coll}<\infty$ for $\kappa>8$, and
\begin{align}
\mathbb P\big(g_{\tau_x}(x)=\lambda_1(\tau_x)\big)
=
\frac{\int_{u_0}^1 
\big(v(1-v)\big)^{-\frac{4}{\kappa}}
\big(v^2+(1-v)^2\big)^{\frac{8-\kappa}{\kappa}}
\,dv}{
\int_{0}^1 
\big(v(1-v)\big)^{-\frac{4}{\kappa}}
\big(v^2+(1-v)^2\big)^{\frac{8-\kappa}{\kappa}}
\,dv }.
\label{eq:hit-lambda1-prob-thm}
\end{align}
In particular,
\[
\mathbb P\big(g_{\tau_x}(x)=\lambda_2(\tau_x)\big)
=
1-
\mathbb P\big(g_{\tau_x}(x)=\lambda_1(\tau_x)\big).
\]
\item[(ii)] If $0< \kappa\le 4$, then $\tau_\text{coll}=\infty$, and  $\mathbb P(\tau_x<\infty)=0$.
\end{enumerate}
\end{theorem}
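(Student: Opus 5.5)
The plan is to treat the time-changed process $U(\theta)$ of Corollary~\ref{co:SDEU} as a one-dimensional diffusion on $(0,1)$ with coefficients $\mu,\sigma$ independent of $\theta$, and apply Feller's boundary classification. Corollary~\ref{co:SDEU} shows that $\theta(t)\to\infty$ as $t\uparrow\tau_{\rm coll}$. Hence $\tau_x<\tau_{\rm coll}$ is equivalent to $U(\theta)$ exiting $(0,1)$ in finite $\theta$-time. The event $\{\tau_x=\infty,\ \tau_{\rm coll}=\infty\}$ corresponds to $U(\theta)$ never exiting. The statements about $\tau_{\rm coll}$ itself come from Proposition~\ref{prop:classical-particle-transition} with $N=2$, or directly from the Bessel description of $Y$ in the proof of Corollary~\ref{co:SDEU}.

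\textbf{Scale density.} First I compute $\rho$ explicitly. Write $\sigma^2(y)=y^2+(1-y)^2=1-2y(1-y)$ and $\frac{2}{y(y-1)}=-\frac{2}{y(1-y)}$. Then
\[
\frac{2\mu}{\sigma^2}=\frac{2(2y-1)}{\kappa}\left[-\frac{2}{y(1-y)\sigma^2}+\frac{\kappa-4}{\sigma^2}\right].
\]
I decompose using $(\sigma^2)'=2(2y-1)$ and $(\log(y(1-y)))'=(1-2y)/(y(1-y))$, together with the partial fraction
\[
\frac{1}{y(1-y)\sigma^2}=\frac{1}{y(1-y)}+\frac{2}{\sigma^2}.
\]
This gives
\[
\int\frac{2\mu}{\sigma^2}=\frac{4}{\kappa}\log\bigl(y(1-y)\bigr)-\frac{8}{\kappa}\log\sigma^2+\frac{\kappa-4}{\kappa}\log\sigma^2+\text{const}.
\]
Consequently, up to a constant,
\[
\rho(v)=\bigl(v(1-v)\bigr)^{-4/\kappa}\bigl(v^2+(1-v)^2\bigr)^{(8-\kappa)/\kappa},
\]
which is exactly the integrand in \eqref{eq:hit-lambda1-prob-thm}. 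Since $\sigma^2\in[1/2,1]$, near each endpoint $\rho(v)\asymp v^{-4/\kappa}$ (respectively $(1-v)^{-4/\kappa}$). Therefore $S(0+)$ and $S(1-)$ are finite if and only if $\kappa>4$.

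\textbf{Case $0<\kappa\le4$.} Both endpoints have infinite scale, so they are unattainable and $U(\theta)$ stays in $(0,1)$ for all $\theta<\infty$. Since $\theta$ exhausts $[0,\infty)$ and $\tau_{\rm coll}=\infty$, this gives $\tau_x=\infty$ almost surely. This also agrees with Theorem~\ref{thm:classical-interface-transition}.

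\textbf{Case $\kappa>4$.} Both endpoints have finite scale. The speed density $m=1/(\sigma^2\rho)\asymp v^{4/\kappa}$ is integrable, so Feller's test $\int_0 (S(v)-S(0))\,m(v)\,dv<\infty$ holds and both boundaries are exit/regular. Hence the exit time $T$ of $U(\theta)$ from $(0,1)$ is finite almost surely. I would justify this by showing that $\mathbb E[T]=\int G(u_0,v)m(v)\,dv<\infty$, using the Green function, which is bounded by $S(1)-S(0)$. Since $\theta(t)\to\infty$ at $\tau_{\rm coll}$, $T<\infty$ forces $\tau_x<\tau_{\rm coll}$ almost surely. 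The martingale $S(U(\theta\wedge T))$ is bounded, so optional stopping gives $\mathbb P(U_T=0)=(S(1)-S(u_0))/(S(1)-S(0))$, which is \eqref{eq:hit-lambda1-prob-thm}. The event $U_T=0$ is exactly $g_{\tau_x}(x)=\lambda_1(\tau_x)$.

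\textbf{Main obstacle.} The delicate point is ensuring the time change is legitimate all the way up to $\tau_x$. Specifically, one must check that $U$ reaching $\{0,1\}$ at a finite $\theta$ corresponds to a finite $t<\tau_{\rm coll}$, with no degeneracy coming from $Y\to0$ or $Y\to\infty$. This follows from strict monotonicity of $\theta$ and $\theta(\tau_{\rm coll}-)=\infty$, as in Corollary~\ref{co:SDEU}. A further subtlety is that $W(\theta)$ must be a Brownian motion in the new clock. The factor $\kappa/Y^2$ used in the time change exactly normalizes the diffusion coefficient $\kappa\sigma^2/Y^2$, so $W(\theta)$ is a Brownian motion by Lévy's characterization.
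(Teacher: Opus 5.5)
Your proposal is correct and follows essentially the same route as the paper (time change from Corollary~\ref{co:SDEU}, the explicit scale density $\rho(v)\propto (v(1-v))^{-4/\kappa}(v^2+(1-v)^2)^{(8-\kappa)/\kappa}$, Feller's classification for $0<\kappa\le 4$, and optional stopping on $S(U)$ for $\kappa>4$), and your speed-measure/Green-function bound on $\mathbb E[T]$ supplies the finiteness of the exit time, which the paper infers only from finiteness of the scale function. There is one arithmetic slip: the partial-fraction term contributes $-\frac{4}{\kappa}\log\sigma^2$, not $-\frac{8}{\kappa}\log\sigma^2$, to $\int 2\mu/\sigma^2$ (as written, your antiderivative would give the exponent $(12-\kappa)/\kappa$), but the $\rho$ you state is the correct one.
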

\begin{proof}
    Our main object is the SDE given in Corollary~\ref{co:SDEU}. Note that the time change $\theta(t)$ defined there is shown to be a bijection from $t$-time $[0,\tau_\text{coll})$ to $\theta$-time $[0,\infty)$, with $\tau_\text{coll}<\infty$ if $0<\kappa\leq8$ and $\tau_\text{coll}=\infty$ if $\kappa>8$, so we can work with the scale density $\rho$ and scale function $S$ as given, and any property of $U(\theta)$ is inherited by $U(t)$. The scale density can be expressed as follows,
    \begin{align*}
        \ln \rho(u)&=-\frac2\kappa\int_{1/2}^u\frac{2y-1}{y^2+(1-y)^2}\left[\frac{2}{y(y-1)}+(\kappa-4)\right]\, dy\\
        &=-\frac2\kappa\int_{0}^{2u-1}\frac{v}{1+v^2}\left[(\kappa-4)-\frac{8}{1-v^2}\right]\, dv.
    \end{align*}
    The second line is obtained by performing the substitution $v:=2y-1$. The first term is readily integrated,
    \begin{align*}
        -\frac{2(\kappa-4)}\kappa\int_{0}^{2u-1}\frac{v}{1+v^2}\, dv=-\frac{\kappa-4}{\kappa}\ln[2(2u^2-2u+1)],
    \end{align*}
    and the second term reads
    \begin{align*}
        \frac{16}\kappa\int_{0}^{2u-1}\frac{v}{1-v^4}\, dv&=\frac{8}\kappa\int_{0}^{2u-1}\left[\frac{v}{1+v^2}-\frac{1}{2(1+v)}+\frac{1}{2(1-v)}\right]\, dv\\
        &=\frac4\kappa\ln(4u^2-4u+2)-\frac4\kappa\ln(2u)-\frac4\kappa\ln[2(1-u)]\\
        &=\frac4\kappa\ln\frac{2u^2-2u+1}{2u(1-u)},
    \end{align*}
    so both terms add up to
    \begin{align*}
        \ln \rho(u)&=-\frac{\kappa-4}{\kappa}\ln\big[2(2u^2-2u+1)\big]+\frac4\kappa\ln\frac{2u^2-2u+1}{2u(1-u)}\\
        &=-\ln2+\frac{8-\kappa}{\kappa}\ln(u^2+(1-u)^2)-\frac4\kappa\ln[u(1-u)],
    \end{align*}
    or
    \begin{align*}
        \rho(u)=\frac12(u^2+(1-u)^2)^{(8-\kappa)/\kappa}\big[u(1-u)\big]^{-4/\kappa}.
    \end{align*}
    By inspection of this function, we see that the scale function, which is the integral of $\rho$, diverges at $u=0$ or $1$ when $\kappa\leq4$ because of the term $[u(1-u)]^{-4/\kappa}$. This, by \cite[Sec. 11]{Feller} and Corollary~\ref{co:SDEU}, yields \emph{(ii)}.
    
    When $\kappa>4$, the scale function does not diverge, meaning that $\theta(\tau_x)$ is finite, namely $\theta(\tau_x)<\theta(\tau_\text{coll})=\infty$, so here it becomes clear that $\tau_x<\tau_\text{coll}$, and the probability of hitting $\lambda_1$ is given by \eqref{eq:hit-lambda1-prob-thm} due to the optional stopping theorem. This happens at a finite $t$-time $\tau_x$ by Corollary~\ref{co:SDEU}, where it is shown that $\theta$ is a bijection between $t\in[0,\tau_\text{coll})$ and $\theta(t)\in[0,\infty)$, with $\tau_\text{coll}<\infty$ when $\kappa>8$ and $\tau_\text{coll}=\infty$ when $0<\kappa\leq8$. 
\end{proof}

From this result, we can observe two features.
The first is that we recover the phases outlined in Theorem~\ref{thm:classical-interface-transition}, but the conditions for this statement are stronger in the sense that whenever there is a collision in finite time, we know that there is a hitting event that precedes it.
This is clear from the setting of our statement, and the additional strength of the assumption allows us to write the result for any $\kappa$ positive, not only $0<\kappa\leq8$.
Indeed, when there are more than two drivers, there may be collisions between drivers that do not surround $g_t(x)$, in which case one may have $\tau_\text{coll}<\tau_x$.

The second feature is that the scale density $\rho(u)$ turns out to be symmetric wrt $u=1/2$.
This is because the $A_1$ ($N=2$ Dyson) drivers enjoy left-right symmetry in law, so we can expect that the probability of hitting by $\lambda_1$ after starting from $u_0$ should be the same as the probability of hitting by $\lambda_2$ after starting from $1-u_0$.
We do not expect this second feature to be present in the $B_2$ or $D_2$ cases when $\lambda_1(0)<x<\lambda_2(0)$, as in those cases there exists an additional drift from $-\lambda_1$ and $-\lambda_2$ (and also from the origin in the $B_N$ case) which drives the process toward $\lambda_2$, so in that case the scale density should be non-symmetric.

\section{Application of the method for two drivers in a  deterministic setting}

So far our Symmetric Polynomials method was applied to models with random drivers which have applications in Physics, and beyond. However, there is a large part of the literature devoted to the deterministic study of the Loewner and Multiple Loewner equations \cite{Kager04}. In addition to that there are works considering the noise parameter going to zero and investigating the models in those regimes; this approach also appeared in {the literature of particle systems under the name of freezing limits, see for instance \cite{AHV,AKM12,AKM14,AV19B,AV19A,HV21,V19,Voit2023,VW}, for a survey. It is well-known (see for instance \cite{AKM12,AKM14,Deift}) that when $\kappa\to0$ the evolution of a Bessel-Dunkl particle system configuration ($b_i\equiv0$, $k_\alpha\equiv c^2>0$) satisfies an ODE which asymptotically tends to $C\sqrt{t}v$, where $C>0$ is a constant and $v$ is a vector comprised of the zeros of the $N$th degree polynomial of a classical orthogonal family, Hermite zeros for $R=A_{N-1}$ and square root of Laguerre zeros for $R=B_N$. In this section, we consider the Dyson case, namely $R=A_{N-1}$, where if $N=2$, the zeros of the 2nd Hermite polynomial lie at $\pm1/\sqrt2$, so we can write the asymptotic form of the drivers' trajectories as $\pm C\sqrt t$, where $C$ absorbs the factor $\sqrt2$.}

As we have seen so far, the method of Symmetric Polynomials is very helpful when one wants to study or deduce phase transitions of the models in terms of the hitting times of the drivers. In the deterministic case the phase transition is understood  for some choice of drivers (see \cite{Kager04}). To the best of our knowledge less is known about these phase transitions for the multiple Loewner dynamics. We explore some applications of this method for some choices of drivers in the deterministic setting along with some model when the noise parameter goes to zero, that is, the noise is tuned-down in the random system.  

Given the similar structure with the models we have studied before in the random case, we first investigate the method for the model in the following section. In order to follow the same theme, we perform the analysis for $N=2$. One can consider general $N$ models as well. 
\subsection{Multiple Loewner evolution driven by two deterministic repulsive particles}

Here, we take $\kappa=0$ in Lemma~\ref{lm:SDE:U}, but for the sake of added generality we modify the driver evolution given in \eqref{eq:general-root-adapted-loewner} by setting $k_\alpha= c^2$, namely,
\[\frac{d\lambda_1}{dt}=\frac{c^2}{\lambda_1-\lambda_2},\quad \frac{d\lambda_2}{dt}=\frac{c^2}{\lambda_2-\lambda_1},\quad c^2>0.\]
With this setup, we claim the following.
\begin{lemma}
    \label{lm:deterministic:U}
    Consider $U(t)$ as defined in Lemma~\ref{lm:SDE:U} with $\kappa=0$ and the drivers $\lambda_1$, $\lambda_2$ as given above and started from $-C$ and $+C$ respectively ($C>0$) without loss of generality, together with the time transformation
    \[s(t):=\frac1{4c^2}\ln\Big(\frac{C^2+c^2t}{C^2}\Big).\]
    Then, $U(s)$ satisfies the ODE
    \[\frac{dU}{ds}=(2U-1)\Big(\frac2{U(U-1)}-c^2\Big).\]
\end{lemma}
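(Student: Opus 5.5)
The plan is a direct computation that mirrors the proof of Lemma~\ref{lm:SDE:U}, with all It\^o corrections absent since $\kappa=0$, followed by the deterministic time change $s(t)$. There is no real obstacle. The only points needing care are the sign bookkeeping in the driver drift, now with strength $c^2$ instead of $4$, and checking that $ds/dt$ is exactly $1/Y^2$.

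\textbf{Step 1: solve for the gap.} Put $Y:=\lambda_2-\lambda_1$. From the driver ODEs,
\begin{equation*}
\frac{dY}{dt}=\frac{2c^2}{Y}, \qquad \frac{d(Y^2)}{dt}=4c^2 .
\end{equation*}
With $Y(0)=2C$ this gives $Y(t)^2=4(C^2+c^2t)$. In particular $Y>0$ for all $t\ge0$, so the drivers never collide and $U=(g_t(x)-\lambda_1)/Y$ is well defined up to $\tau_x$.

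\textbf{Step 2: differentiate $U$.} As in Lemma~\ref{lm:SDE:U}, write $g_t(x)-\lambda_1=UY$ and $g_t(x)-\lambda_2=(U-1)Y$. Then
\begin{equation*}
\partial_t g_t(x)=\frac{2}{Y}\Bigl(\frac1U+\frac1{U-1}\Bigr), \qquad \frac{d\lambda_1}{dt}=-\frac{c^2}{Y}.
\end{equation*}
The ordinary quotient rule gives
\begin{equation*}
\frac{dU}{dt}=\frac{\partial_tg_t(x)-\lambda_1'}{Y}-U\frac{Y'}{Y}=\frac1{Y^2}\Bigl[\frac{2(2U-1)}{U(U-1)}+c^2-2c^2U\Bigr].
\end{equation*}
Here we used $U^{-1}+(U-1)^{-1}=(2U-1)/(U(U-1))$. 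Factoring out $2U-1$ yields
\begin{equation*}
\frac{dU}{dt}=\frac{2U-1}{Y^2}\Bigl[\frac{2}{U(U-1)}-c^2\Bigr].
\end{equation*}
As a consistency check, this is the drift of Lemma~\ref{lm:SDE:U} with $\kappa=0$ and the interaction constant $4$ replaced by $c^2$. The term $(\kappa-4)$ there came from $-\kappa$ (It\^o) $+\,2\kappa U$ (It\^o) $+\,4-8U$ (drift), and these become $c^2-2c^2U$ here.

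\textbf{Step 3: time change.} By definition, $\frac{ds}{dt}=\frac{1}{4(C^2+c^2t)}=\frac1{Y(t)^2}$ by Step~1. Since $s$ is a smooth, strictly increasing bijection $[0,\infty)\to[0,\infty)$, the chain rule $\frac{dU}{ds}=Y^2\frac{dU}{dt}$ gives the claimed ODE
\begin{equation*}
\frac{dU}{ds}=(2U-1)\Bigl(\frac{2}{U(U-1)}-c^2\Bigr).
\end{equation*}
This holds for $s<s(\tau_x)$.
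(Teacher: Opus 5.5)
Your proof is correct and follows the paper's route. You compute $dU/dt=\frac{2U-1}{Y^2}\bigl(\frac{2}{U(U-1)}-c^2\bigr)$ as the $\kappa=0$, interaction-$c^2$ version of the calculation in Lemma~\ref{lm:SDE:U}, and then use $ds/dt=1/Y^2=1/(4(C^2+c^2t))$; the only cosmetic differences are that you obtain $Y^2$ directly from $d(Y^2)/dt=4c^2$ rather than via the paper's observation $\lambda_1+\lambda_2\equiv 0$, and that you write out the quotient-rule computation that the paper merely cites as a modification of that lemma.
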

\begin{proof}
We note that
\[\frac{d}{dt}(\lambda_1+\lambda_2)=\frac{c^2}{\lambda_1-\lambda_2}+\frac{c^2}{\lambda_2-\lambda_1}=0,\]
so $\lambda_1+\lambda_2$ is constant, and equal to zero because $\lambda_1(0)+\lambda_2(0)=-C+C=0$, so for every $t$, $\lambda_2=-\lambda_1$. We can readily solve the drivers' differential equations to obtain
\[\lambda_2(t)=\sqrt{C^2+c^2t},\quad \lambda_1(t)=-\sqrt{C^2+c^2t}.\]
Following a modification of Lemma~\ref{lm:SDE:U} ($k_\alpha\equiv c^2$), we see that 
\[\frac{dU}{dt}=\frac{2U-1}{4(C^2+c^2t)}\Big(\frac2{U(U-1)}-c^2\Big)\]
(compare the denominator in the coefficient with $Y^2=(\lambda_2-\lambda_1)^2$ in Lemma~\ref{lm:SDE:U}). Using the time transformation we see that the time derivative of $U$ becomes
\[\frac{dU}{dt}=\frac{ds}{dt}\frac{d}{ds}U(s(t))=\frac{d}{dt}\Big[\frac1{4c^2}\ln\Big(\frac{C^2+c^2t}{C^2}\Big)\Big]\frac{d}{ds}U(s(t))=\frac1{4(C^2+c^2t)}\frac{d}{ds}U(s(t)),\]
so substituting this in the previous expression we obtain the ODE in $s$-time given in the claim.
\end{proof}
Note that in all regimes, the ODE in Lemma~\ref{lm:deterministic:U} has well-defined asymptotics and does not show collisions. In the case where $U(0)>1$, the ODE drives $U$ toward $(1+\sqrt{1+8/c^2})/2$. If $0<U(0)<1$, then $U$ approaches $1/2$, whereas for $U(0)<0$, $U$ tends toward $(1-\sqrt{1+8/c^2})/2$. We also note that setting $c^2\downarrow0$ fixes the drivers at $\pm C$, so in that case the time transformation is unnecessary and the only stable point is $U\to1/2$, which occurs when $0<U(0)<1$.

\subsection{The Loewner Equation with Two Symmetric Driving Points}

Let us consider the case where $C=0$. We consider the multiple Loewner ODE 
driven by two symmetric drivers $\lambda_1(t) = +c\sqrt{t}$ and
$\lambda_2(t) = -c\sqrt{t}$, $c > 0$:
\begin{equation}\label{eq:loewner1}
  \partial_tg_t(x)
  = \frac{2}{g_t(x) - \lambda_1(t)} + \frac{2}{g_t(x) - \lambda_2(t)}
  = \frac{2}{g_t(x) - c\sqrt{t}} + \frac{2}{g_t(x) + c\sqrt{t}}
  = \frac{4g_t(x)}{g_t(x)^2 - c^2 t},
\end{equation}
where the last equality uses the partial-fraction identity and holds for all
$t$ such that $g_t(x) \notin \{\lambda_1(t), \lambda_2(t)\}$.
This means that we must have $x\neq 0$ in order to have a non-trivial solution for $g_t(x)$, and therefore we have either $x>\lambda_1(0)=\lambda_2(0)$ or $x<\lambda_1(0)=\lambda_2(0)$; in both cases the product $(x-\lambda_1(0))(x-\lambda_2(0))$ is positive, and it remains positive until a driver hits $g_t(x)$.

For a particle $g_t(x)$ satisfying~\eqref{eq:loewner1}, define the two
symmetric polynomials in the translated variables $(g_t(x) - \lambda_1)$ and
$(g_t(x) - \lambda_2)$: 

\begin{align*}
  P(t) &:=\sqrt{E_2}=(g_t(x) - \lambda_1(t))(g_t(x) - \lambda_2(t)) = g_t^2(x) - c^2 t, \\%\label{eq:P}\\[4pt]
  S(t) &:= E_1 =  (g_t(x) - \lambda_1(t))^2 + (g_t(x) - \lambda_2(t))^2 = 2g_t^2(x) + 2c^2 t. %\label{eq:S}
\end{align*}

$P(t)$ is properly defined due to the requirement that $x\neq0$, as mentioned above. These two quantities are related by an exact algebraic identity:
\begin{equation}\label{eq:identity1}
  S(t) = 2P(t) + 4c^2 t, \qquad t \geq 0.
\end{equation}
Hence, $S$ and $P$ are \emph{not independent}: $S$ is an affine function of
$P$ with a strictly increasing time-dependent shift $4c^2 t$.

\begin{proposition}\label{prop:ODEs}
Along any solution of~\eqref{eq:loewner1}, the symmetric polynomials
$P(t)$ and $S(t)$ satisfy the following first-order ODEs
\begin{align}
  \frac{dP}{d t} &= \frac{(8 - c^2)\,g_t^2(x) + c^4 t}{P(t)}, \label{eq:Pdot1}\\[6pt]
  \frac{dS}{d t} &= 2\,\frac{dP}{d t} + 4c^2. \label{eq:Sdot1}
\end{align}
\end{proposition}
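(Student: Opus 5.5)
The plan is a direct computation along a solution of \eqref{eq:loewner1} on any interval where $g_t(x)\notin\{\lambda_1(t),\lambda_2(t)\}$, so that the Loewner vector field is smooth. On this interval $P(t)=g_t(x)^2-c^2t\neq0$, because $P$ is exactly the product of the two gaps. This is the same nonvanishing that makes the right-hand side of \eqref{eq:loewner1} well defined. Only elementary calculus is needed. No stochastic term appears here, since the drivers $\pm c\sqrt t$ are deterministic. Note also that the case $g_t(x)^2=c^2t$ corresponds precisely to the hitting event, after which nothing is claimed.

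First I differentiate $P(t)=g_t(x)^2-c^2t$ using the chain rule and the reduced form of \eqref{eq:loewner1}:
\begin{equation*}
\frac{dP}{dt}=2g_t(x)\,\partial_tg_t(x)-c^2=\frac{8g_t(x)^2}{g_t(x)^2-c^2t}-c^2=\frac{8g_t(x)^2-c^2P(t)}{P(t)}.
\end{equation*}
Next I substitute $P(t)=g_t(x)^2-c^2t$ once more, in the numerator only. This gives $8g_t(x)^2-c^2g_t(x)^2+c^4t=(8-c^2)g_t(x)^2+c^4t$, which is \eqref{eq:Pdot1}. The point of this step is to keep the denominator expressed through $P$ itself.

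For \eqref{eq:Sdot1}, I would not recompute $dS/dt$ from scratch. Instead I differentiate the algebraic identity \eqref{eq:identity1}, $S(t)=2P(t)+4c^2t$. That identity is immediate from the definitions: expanding the squares gives $2g_t(x)^2+2c^2t$, while $2P+4c^2t=2g_t(x)^2-2c^2t+4c^2t$. Differentiating it yields $dS/dt=2\,dP/dt+4c^2$. As a consistency check, the direct computation $dS/dt=4g_t(x)\partial_tg_t(x)+2c^2=2(dP/dt+c^2)+2c^2$ gives the same result.

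There is no genuine obstacle here. The only point requiring care is to state the domain of validity: the identities hold for $t$ below the first time $g_t(x)$ meets $\pm c\sqrt t$. On that domain $x\neq0$ ensures a nontrivial solution, and $P(t)$ keeps the positive sign of $P(0)=x^2$ until the hitting time. One should also note the singularity of the drivers at $t=0$. Since $\sqrt t$ is not differentiable at $0$, I would state the ODEs for $t>0$, or interpret them via continuity of $g_t(x)$ at $t=0$. They hold there anyway, because $g$ is $C^1$ near $0$ when $x\neq0$.
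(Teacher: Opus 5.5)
Your proposal is correct and follows the paper's own proof: you differentiate $P=g_t(x)^2-c^2t$ using the reduced Loewner equation, rewrite $8g_t(x)^2/P-c^2$ over the common denominator $P$ to get \eqref{eq:Pdot1}, and obtain \eqref{eq:Sdot1} by differentiating the identity \eqref{eq:identity1}. Your remarks on the domain of validity and the consistency check of $dS/dt$ are accurate additions but do not change the argument.
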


\begin{proof}
Differentiating $P = g_t^2(x) - c^2 t$ and substituting~\eqref{eq:loewner1}:
\begin{align*}
  \frac{dP}{d t} =& 2g_t(x)\,\partial g_t(x) - c^2
            = 2g_t(x) \cdot \frac{4g_t(x)}{g_t^2(x) - c^2 t} - c^2
            = \frac{8g_t^2(x)}{P} - c^2\\
            =& \frac{8g_t^2(x) - c^2 P}{P}
            = \frac{8g_t^2(x) - c^2(g_t^2(x) - c^2 t)}{P}
            = \frac{(8 - c^2)g_t^2(x) + c^4 t}{P}.
\end{align*}
 Equation~\eqref{eq:Sdot1} 
follows immediately from differentiating the
identity~\eqref{eq:identity1}. 
\end{proof}

% \noindent Writing $u(t) = P(t)^2$, equation~\eqref{eq:Pdot1} becomes
% \begin{equation}\label{eq:u}
%   \frac{du}{dt} = 2\bigl[(8 - c^2)\,g_t^2(x) + c^4 t\bigr],
% \end{equation}
% a Bernoulli-type relation linking the growth rate of $P^2$ to the
% competition between the Loewner repulsion (coefficient 8) and the
% driving spread (coefficient $c^2$).

Recall that $\tau_x$ denotes the first hitting time of $g_t(x)$ by any of the drivers $\lambda_1$ or $\lambda_2$ and that the first time that drivers collide is $\tau_{\text{coll}}=\infty$. 
%$Then, the sign of ${dP}/dt$ is entirely controlled by the numerator
% $(8 - c^2)g_t^2(x) + c^4 t$ in~\eqref{eq:Pdot1}.
The ODE of $P(t)$ can be rewritten as follows,
\begin{align*}
    \frac{dP}{dt}=8-c^2+\frac{8c^2t}{P(t)},
\end{align*}
which is a closed ODE. By inspection, one notices that \(\tau_x=\infty\), since for any initial condition \(P(0)>0\), the rational term ensures that the right-hand side becomes positive as \(P(t)\) approaches zero. Hence, there are no collisions.

We can also consider other models, for instance a multiple Loewner ODE on $[0,1)$ driven by two symmetric
points that shrink to zero:
\begin{equation}\label{eq:drivers}
  \lambda_1(t) = +c\sqrt{1-t}, \qquad \lambda_2(t) = -c\sqrt{1-t}, \qquad c > 0.
\end{equation}
The equation of motion is
\begin{equation}\label{eq:loewner}
  \partial_tg_t(x)
  = \frac{2}{g_t(x) - c\sqrt{1-t}} + \frac{2}{g_t(x) + c\sqrt{1-t}}
  = \frac{4g_t(x)}{g_t^2(x) - c^2(1-t)}.
\end{equation}
Using similar definitions for $P$ and $S$, we arrive at the following ODEs,
\begin{align}
  \frac{dP}{dt} &= \frac{(8+c^2)\,g_t^2(x) - c^4(1-t)}{P(t)},
  \label{eq:Pdot}\\[6pt]
  \frac{dS}{dt} &= 2\,\frac{dP}{dt} - 4c^2.
  \label{eq:Sdot}
\end{align}

Performing similar calculations, we obtain the following ODE for $P$,

\begin{align}
  \frac{dP}{dt}&= (8+c^2) + \frac{8c^2(1-t)}{P(t)}.
  \label{eq:Pself}
\end{align}

Both terms on the right hand-side of~\eqref{eq:Pself} are strictly positive for
$t\in[0,1)$ and $P>0$.  Hence $d{P}/dt > 0$ for all such $t$,
and $P$ is strictly increasing. Thus, we see that, in this case as well,  $g_t(x)$ never reaches either driver. 

Thus, we have studied deterministic versions of the symmetric polynomial method in quite simple examples, but these techniques are readily available and generalized to other deterministic models.

\section{Future directions}\label{sec:outlook}

One potential extension of the symmetric polynomial methods applied to multiple SLE is to consider the reverse-time (backward) multiple Loewner equation, see \cite{Vladweld}, for a one-particle version of the corresponding dynamics. 
In the reverse-time case, one is interested in studying the hitting times of the drivers (in the corresponding regime of parameters for which they are finite a.s.), see \cite{Vladweld}. The same question is interesting in the multiple Loewner differential equation case as well. For the one-curve case,  the problem of continuity of the hitting times (via the welding homeomorphism) is linked with the famous still open problem of the continuity in $\kappa$ problem for the SLE curves and related objects, see \cite{Vladcont,FrizYuan,Vikcont} for recent progress in this direction.

The question of continuity in the parameter $\beta\geq1$ is interesting also for the forward Multiple SLE, see \cite{JiamVlad} for the analysis of continuity in $\beta\geq 1$ of these maps for the case $N=2$ in the Carathe\'{o}dory sense.

In addition, it will be interesting to study the interactions between the technique of symmetric polynomials that we develop for Multiple SLE in this manuscript and the corresponding resolvent analysis developed in \cite{KaVlad} which uses random matrices. One can try to obtain a similar resolvent analysis for the extended maps on the real line and then relate the behavior of the symmetric polynomials with the singular values of the matrix $A_t-g_t(x)I_n$, where $A_t$ is a time-evolving $N \times N$ (random) matrix and $I_n$ is the $N\times N$ matrix, following \cite{KaVlad}. In addition, one can consider other observables and more general matrix dynamics (such as adding a drift, etc.) and their corresponding resolvents and study the corresponding symmetric polynomials for those models. In addition, given the current developments in the area of neural networks and SLE, see \cite{shrotri2026neural}, it is interesting to consider an extensions of these methods to multiple Loewner models, and to the learning of parameters of the corresponding symmetric polynomials. 

A natural extension of the analysis in our paper is to consider the case of $N\geq 3$ Dyson particles as drivers. Our initial investigations on this direction reveal that the dynamics is more complicated and a similar analysis based on a  diffusion on the interval $[0, 1]$ is not direct. It is known that the case $N=2$ caries special structure \cite{JiamVlad}. However, we believe that this method based on symmetric polynomials will be effective in those cases as well, with more-involved and detailed analysis of the corresponding process.

A natural follow-up of this method will consist also in the analysis of the complex valued dynamics $g_t(z).$

\section*{Acknowledgments}
NH was supported by the Probability and Statistics Group of the German Mathematical Society for a trip to the USA to work on this paper. 

\printbibliography
\end{document}